\newtheorem{theorem}{Theorem}[section]
\newtheorem{lemma}[theorem]{Lemma}
\theoremstyle{plain}                    
\newtheorem{teo}{Theorem}[section]      
\newtheorem{prop}[teo]{Proposition}    
\newtheorem{cor}[teo]{Corollary}       
\theoremstyle{definition}               
\newtheorem{defin}{Definition}[section]
\newtheorem{ese}{Example}[section]      
\theoremstyle{remark}                   
\newtheorem{oss}{Remark}[section]       
\newtheorem{que}{Question}[section]
\def\R{\mathbb{R}}
\def\H{\mathbb{H}}
\def\N{\mathbb{N}}
\title{Averages and the $\ell^{q,1}$-cohomology of Heisenberg groups}
\author{Pierre Pansu and Francesca Tripaldi}
\begin{document}
\subjclass[2010]{35R03, 58A10, 43A80}
\keywords{Heisenberg groups, Rumin complex, $\ell^{p}$-cohomology, parabolicity}
\begin{abstract}
    Averages are invariants defined on the $\ell^1$ cohomology of Lie groups. We prove that they vanish for abelian and Heisenberg groups. This result completes work by other authors and allows to show that the $\ell^1$ cohomology vanishes in these cases.
\end{abstract}
\maketitle

\section{Introduction}

\subsection{From isoperimetry to averages of $L^1$ forms}

The classical isoperimetric inequality implies that if $u$ is a compactly supported function on $\R^n$, $\|u\|_{n'}\leq C\|du\|_1$, where $n'=\frac{n}{n-1}$. Equivalently, every compactly supported closed $1$-form $\omega$ admits a primitive $u$ such that $\|u\|_{n'}\leq C\|\omega\|_1$. More generally, if $\omega$ is a closed $1$-form on $\R^n$ which belongs to $L^1$, does it have a primitive in $L^{n'}$ ? 

There is an obstruction. We observe that each component $a_i$ of $\omega=\sum_{i=1}^n a_i dx_i$ is again in $L^1$, the integral $\int_{\R^n}a_i \,dx_1\cdots dx_n$ is well defined and it is an obstruction for $\omega$ to be the differential of an $L^q$ function (for every finite $q$). Indeed, if $\omega=du$, $a_n=\frac{\partial u}{\partial x_n}$. For almost every $(x_1,\ldots,x_{n-1})$, the function $t\mapsto \frac{\partial u}{\partial x_n}(x_1,\ldots,x_{n-1},t)$ belongs to $L^1$ and $t\mapsto u(x_1,\ldots,x_{n-1},t)$ belongs to $L^q$. Since $u(x_1,\ldots,x_{n-1},t)$ tends to $0$ along subsequences tending to $+\infty$ or $-\infty$, 
$$
\int_{\R}\frac{\partial u}{\partial x_n}(x_1,\ldots,x_{n-1},t)\,dt=0, \quad\text{hence}\quad \int_{\R^n}\frac{\partial u}{\partial x_n}\,dx_1\cdots dx_n=0.
$$
A similar argument applies to other coordinates. Note that $a_n \,dx_1\wedge\cdots\wedge dx_n=(-1)^{n-1}\omega\wedge(dx_1\wedge\cdots\wedge dx_{n-1})$.

More generally, if $G$ is a Lie group of dimension $n$, there is a pairing, the \emph{average pairing}, between closed $L^1$ $k$-forms $\omega$ and closed left-invariant $(n-k)$-forms $\beta$, defined by
$$
(\omega,\beta)\mapsto \int_G \omega\wedge\beta.
$$
The integral vanishes if either $\omega=d\phi$ where $\phi\in L^1$, or $\beta=d\alpha$ where $\alpha$ is left-invariant. Indeed, Stokes formula $\int_{M}d\gamma=0$ holds for every complete Riemannian manifold $M$ and every $L^1$ form $\gamma$ such that $d\gamma\in L^1$. Hence the pairing descends to quotients, the $L^{1,1}$-cohomology 
$$
L^{1,1}H^k(G)=\text{closed }L^1 \,k\text{-forms}/d(L^1 \,(k-1)\text{-forms with differential in }L^1),
$$
and the Lie algebra cohomology
$$
H^{n-k}(\mathfrak{g})=\text{closed, left-invariant }(n-k)\text{-forms}/d(\text{left-invariant }(n-k-1)\text{-forms}).
$$

\subsection{$\ell^{q,1}$ cohomology}

It turns out that $L^{1,1}$-cohomology has a topological content. By definition, the $\ell^{q,p}$ cohomology of a bounded geometry Riemannian manifold is the $\ell^{q,p}$ cohomology of every bounded geometry simplicial complex quasiisometric to it. For instance, of a bounded geometry triangulation. Contractible Lie groups are examples of bounded geometry Riemannian manifolds for which $L^{1,1}$-cohomology is isomorphic to $\ell^{1,1}$-cohomology. 

We do not need define the $\ell^{q,p}$ cohomology of simplicial complexes here, since, according to Theorem 3.3 of \cite{Pansu-Rumin}, every $\ell^{q,p}$ cohomology class of a contractible Lie group can be represented by a form $\omega$ which belongs to $L^p$ as well as an arbitrary finite number of its derivatives. If the class vanishes, then there exists a primitive $\phi$ of $\omega$ which belongs to $L^q$ as well as an arbitrary finite number of its derivatives. This holds for all $1\leq p\leq q\leq\infty$. 

Although $\ell^p$ with $p>1$, and especially $\ell^2$ cohomology of Lie groups has been computed and used for large families of Lie groups, very little is known about $\ell^1$ cohomology.

\subsection{From $\ell^{1,1}$ to $\ell^{q,1}$ cohomology}

For instance, the averaging pairing is specific to $\ell^1$ cohomology and it has never been studied yet. The first question we want to address is whether the averaging pairing provides information on $\ell^{q,1}$ cohomology for certain $q>1$.

\begin{que}
Given a Lie group $G$, for which exponents $q$ and which degrees $k$ is the averaging pairing $\ell^{q,1}H^k(G)\otimes H^{n-k}(\mathfrak{g})\to\R$ well-defined?
\end{que}

The question is whether there exists $q>1$ such that the pairing vanishes on all $L^1$ forms which are differentials of $L^q$ forms. We just saw that for abelian groups $\R^n$, the pairing is well defined for $k=1$ and all finite exponents $q$. Here is a more general result.

\begin{theorem}
\label{defined}
Let $G$ be a Carnot group. In each degree $1\leq k\leq n$, there is an explicit exponent $q(G,k)>1$ (see Definition \ref{jq}) such that the averaging pairing is defined on $\ell^{q,1}H^k(G)$ for $q\in[1,q(G,k)]$.
\end{theorem}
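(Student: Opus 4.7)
The strategy is to reduce to a Stokes-type vanishing on $G$ via the identity $\omega\wedge\beta=d(\phi\wedge\beta)$. By Theorem 3.3 of \cite{Pansu-Rumin}, a class in $\ell^{q,1}H^k(G)$ that is bounded by the pairing can be represented by a smooth closed $L^1$ $k$-form $\omega$; if that class vanishes, it admits a primitive $\phi\in L^q$, smooth and with all derivatives in any prescribed $L^p$. Since $\beta$ is closed and left-invariant, $d(\phi\wedge\beta)=d\phi\wedge\beta=\omega\wedge\beta$, so the averaging integral equals $\int_G d(\phi\wedge\beta)$, which vanishes by the complete Riemannian Stokes formula recalled in the introduction provided both $\phi\wedge\beta$ and its differential lie in $L^1(G)$.

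Since $\omega\in L^1$ and $\beta$ is left-invariant (hence bounded for the left-invariant Riemannian metric), the term $d(\phi\wedge\beta)=\omega\wedge\beta$ is automatically $L^1$. The substantive issue is showing $\phi\wedge\beta\in L^1$. The naive Hölder bound $|\phi\wedge\beta|\leq\|\beta\|_\infty|\phi|$ only gives $\phi\wedge\beta\in L^q$, which fails to be $L^1$ on a noncompact group, so extra structure must be invoked. My plan is to adapt the Fubini argument of the introduction: left-invariant forms on a Carnot group split along the weight grading of $\Lambda^\ast\mathfrak{g}^\ast$ under Carnot dilations, and the Rumin representatives $\omega,\phi$ have a complementary weight structure. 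Choosing a left-invariant foliation of $G$ whose leaves are orbits of a subgroup adapted to the weight of $\beta$, Fubini reduces integrability of $\phi\wedge\beta$ to (i) integrability along each leaf and (ii) an integral in the transverse direction. Along each leaf, one copies the abelian argument: $\phi$ restricted to the leaf lies in $L^q$ with directional derivative in $L^1$, so it tends to zero at infinity and contributes a vanishing telescoping term, while the transverse integral converges because the dilation weights make the Hölder exponent $q$ small enough.

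The exponent $q(G,k)$ in Definition \ref{jq} should therefore be the largest $q>1$ for which this pairing works uniformly across the finite-dimensional space of closed left-invariant $(n-k)$-forms, computed from the maximal weight appearing in $H^{n-k}(\mathfrak{g})$, the homogeneous dimension $Q$ and the weight of a Rumin $(k-1)$-form whose differential has weight matching that of $\omega$.

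The hard part will be the simultaneous bookkeeping of three pieces of data: the weight decomposition of $\beta$ in $H^{n-k}(\mathfrak{g})$, the weight decomposition of $\phi$ given by the Rumin complex, and the $L^q$-scaling with respect to Carnot dilations, with the goal of making the range $q\in[1,q(G,k)]$ explicit and provably strictly larger than $\{1\}$. A secondary difficulty is ensuring that the freedom in choosing $\phi$ (it is only defined up to closed forms in $L^q$) does not alter the value of the pairing, which should follow formally once the Stokes argument is valid for one representative.
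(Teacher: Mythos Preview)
Your plan has a genuine gap. You correctly identify that $\phi\wedge\beta$ need not lie in $L^1$, but the proposed rescue via a Fubini/foliation argument does not go through on a general Carnot group. The abelian trick in the introduction works because $\R^n$ is globally a product and each $dx_i$ picks out a one-dimensional factor; on a nonabelian Carnot group there is no foliation by normal subgroups ``adapted to the weight of $\beta$'' in the sense you need. Even on Heisenberg, a left-invariant $(n-k)$-form typically involves the contact form $\tau$, and the center does not sit inside a complementary subgroup in any way that lets you telescope $\phi$ along leaves while keeping the transverse integral finite. The sentence ``contributes a vanishing telescoping term, while the transverse integral converges because the dilation weights make the H\"older exponent $q$ small enough'' is exactly the step that would have to be justified and, as stated, has no content beyond the abelian case. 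A secondary issue: you write $d(\phi\wedge\beta)=d\phi\wedge\beta=\omega\wedge\beta$, but in the Rumin setting one only has $\omega=d_c\phi$, and the Leibniz rule for $d_c$ can fail (Proposition~\ref{dc of Wedge Product}); working in de~Rham instead loses the weight homogeneity you want to exploit.

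The paper avoids integrability of $\phi\wedge\beta$ altogether. One multiplies $\beta$ by a compactly supported cut-off $\xi$ and uses the $L^2$-adjoint: $\int_G d_c\phi\wedge\xi\beta=\int_G\langle\phi,d_c^{*}(\ast\xi\beta)\rangle\,dvol$. Decomposing $d_c^{*}=\sum_j d_{c,j}^{*}$ by horizontal order, H\"older pairs $\|\phi_{w-j}\|_{Q/(Q-j)}$ against $\|\nabla^{j}\xi\|_{Q/j}$. The key input is then $Q$-parabolicity (Lemma~\ref{norms of derivative of xi}): cut-offs $\xi$ can be chosen so that $\|\nabla^{j}\xi\|_{Q/j}\to 0$. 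This is what produces the explicit exponent $q(G,k)=Q/(Q-j(k))$ with $j(k)$ the minimal order of $d_c$ entering degree $k$; once $\phi\in L^{q(G,k)}$ together with enough derivatives, Sobolev embedding puts each $\phi_{w-j}$ in the required $L^{Q/(Q-j)}$. Your outline has no analogue of the parabolicity step, and that is the missing idea.
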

We shall see that $q(\R^n,k)=n'=\frac{n}{n-1}$ in all degrees. For Heisenberg groups, $q(\H^{2m+1},k)=\frac{2m+2}{2m+1}$ if $k\not=m+1$, and $q(\H^{2m+1},m+1)=\frac{2m+2}{2m}$.

\subsection{Vanishing of the averaging pairing}

The second question we want to address is whether the averaging pairing is trivial or not.

\begin{que}
Given a Lie group $G$, for which exponents $q$ and which degrees $k$ does the averaging pairing $\ell^{q,1}H^k(G)\otimes H^{n-k}(\mathfrak{g})\to\R$ vanish?
\end{que}
The pairing is always nonzero in top degree $k=n$. Indeed, there exist $L^1$ $n$-forms (even compactly supported ones) with nonvanishing integral. However, this seems not to be the case in lower degrees.

\begin{theorem}
\label{vanish}
Let $G$ be an abelian group or a Heisenberg group of dimension $n$. In each degree $1\leq k< n$, the averaging pairing vanishes on $\ell^{q,1}H^k(G)$ for $q\in[1,q(G,k)]$.
\end{theorem}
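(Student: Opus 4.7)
The plan is to represent any class in $\ell^{q,1}H^k(G)$ by a smooth closed $L^1$ form $\omega$, to find a primitive $\phi$ of $\omega$ lying in $L^{q(G,k)}$, and finally to pair against $\beta$ by an integration by parts with cutoffs. The regularisation step is routine via Theorem 3.3 of \cite{Pansu-Rumin}. A primitive exists because $\R^n$ and $\H^{2m+1}$ are contractible, and the quantitative estimate $\|\phi\|_{q(G,k)}\le C\|\omega\|_1$ amounts to a Poincar\'e--Sobolev inequality for exact $L^1$ forms. In the abelian case this is the classical Gagliardo--Nirenberg inequality at the $L^1$ endpoint, in every degree. For the Heisenberg group one uses the Rumin complex, whose differential $d_c$ is of order $1$ in generic weights and of order $2$ at the middle weight, producing the two exponents $Q/(Q-1)$ and $Q/(Q-2)$ with $Q=2m+2$, precisely matching the $q(G,k)$ supplied by Theorem \ref{defined}.

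Given such a $\phi$, choose cutoffs $\chi_R$ equal to $1$ on a sub-Riemannian ball $B(R)$, supported in $B(2R)$, with $|\nabla \chi_R|\le C/R$ and $|\nabla^2\chi_R|\le C/R^2$. Since $\omega=d\phi$ and $\beta$ is left-invariant and closed, Stokes on compactly supported forms yields
\begin{equation*}
\int_G \chi_R\,\omega\wedge\beta=-\int_G d\chi_R\wedge\phi\wedge\beta,
\end{equation*}
with an analogous identity featuring second-order terms in $\chi_R$ at the middle Heisenberg weight. The left-hand side tends to $\int_G \omega\wedge\beta$ by dominated convergence, while H\"older's inequality on the annulus $B(2R)\setminus B(R)$ bounds the right-hand side by
\begin{equation*}
C\,\|\phi\|_{L^{q(G,k)}(B(2R)\setminus B(R))}.
\end{equation*}
The exponent $q(G,k)$ is tuned exactly so that the power of $R$ coming from the Haar measure of the shell cancels the power of $1/R$ coming from $|\nabla\chi_R|$ (or from $|\nabla^2\chi_R|$ in the middle degree). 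Absolute continuity of the $L^{q(G,k)}$-norm on nested annuli forces this shell norm to vanish as $R\to\infty$, and the pairing is zero.

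The main obstacle is the middle-degree Heisenberg case. There the Rumin differential is second order, so the Leibniz computation for $d_c(\chi_R\,\phi)\wedge\beta$ produces cross terms that couple both $\phi$ with $\nabla^2\chi_R$ and $\nabla_H\phi$ with $\nabla\chi_R$; one must either work in a Folland--Stein space in which $\nabla_H\phi$ is also controlled in a suitable $L^r$ with the right scaling, or factor $d_c$ through an auxiliary first-order operator and iterate the cutoff argument. A secondary subtlety, present already in the generic degrees, is that at the sharp endpoint $q=q(G,k)$ the capacity estimate for the shell is only $O(1)$, so vanishing is not automatic from the volume balance alone and genuinely relies on the tail of $\phi$ in $L^{q(G,k)}$ going to zero. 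Exhibiting a primitive for which this integrability is truly satisfied, uniformly across Rumin weights, is where the work of the paper goes.
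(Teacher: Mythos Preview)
Your argument has a genuine circularity. You assume that every closed $L^1$ Rumin $k$-form $\omega$ admits a global primitive $\phi\in L^{q(G,k)}$, citing ``classical Gagliardo--Nirenberg at the $L^1$ endpoint'' in the abelian case and the Rumin-complex analogue in the Heisenberg case. But this global inequality is \emph{false} in general: the paper's introduction shows explicitly that for a closed $L^1$ $1$-form on $\R^n$ the integrals $\int a_i$ obstruct the existence of an $L^{n'}$ primitive, and the same phenomenon occurs in every degree $1\le k<n$. The result of \cite{BFP3} that you are implicitly invoking has the vanishing of averages as a \emph{hypothesis}, not a conclusion. So the step ``find $\phi\in L^{q(G,k)}$ with $d_c\phi=\omega$'' presupposes exactly what Theorem~\ref{vanish} is meant to establish; once you have such a $\phi$, your cutoff computation is essentially Proposition~\ref{w-j}, which only shows the pairing \emph{descends} to cohomology, not that it vanishes.

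The paper avoids this circularity by never appealing to a global $L^1\to L^{q}$ Sobolev-type inequality. In the generic degrees $k\ne m$ it integrates by parts on the \emph{other} factor: a left-invariant $(2m+1-k)$-form $\beta$ with $2m+1-k\ne m+1$ has a primitive $\alpha$ of linear growth, and then $\int\xi\,\omega\wedge\beta=\pm\int d_c\xi\wedge\omega\wedge\alpha$ is bounded by $\|\omega\|_{L^1(\H^{2m+1}\setminus B(R))}\to 0$, with no integrability of a primitive of $\omega$ required. In the remaining degree $k=m$ (where $\beta$ has degree $m+1$ and linear-growth primitives fail), the paper constructs primitives of $\omega$ only \emph{locally on shells}, with an $L^1$--$L^1$ Poincar\'e inequality $\|\phi\|_{L^1(B(\lambda R)\setminus B(R))}\le CR\,\|\omega\|_{L^1(B(\mu'R)\setminus B(\mu R))}$ obtained by scaling from a compact shell; this is elementary and sidesteps the endpoint Sobolev issue entirely. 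Your proposed route cannot be repaired without first proving the very inequality that \cite{BFP3} derives \emph{from} Theorem~\ref{vanish}.
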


In combination with results of \cite{BFP3}, Theorem \ref{vanish} implies a vanishing theorem for $\ell^{q,1}$ cohomology.

\begin{cor}
Let $G$ be an abelian group or a Heisenberg group of dimension $n$. In each degree $0\leq k< n$, $\ell^{q,1}H^k(G)=0$ for $q\geq q(G,k)$.
\end{cor}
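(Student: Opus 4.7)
The plan is to combine the main result of \cite{BFP3} with Theorem \ref{vanish} proved in this paper. The reference \cite{BFP3} establishes, for abelian and Heisenberg groups in each degree $1 \leq k < n$ and for every $q \geq q(G,k)$, that a closed $L^1$ $k$-form admits an $L^q$ primitive as soon as all its averages against closed left-invariant $(n-k)$-forms vanish. Equivalently, \cite{BFP3} shows that the averaging pairing induces an injection of $\ell^{q,1}H^k(G)$ into the finite-dimensional dual $H^{n-k}(\mathfrak{g})^*$ in this range of exponents. The content of \cite{BFP3} is thus to reduce the vanishing of $\ell^{q,1}H^k(G)$ to a finite-dimensional computation on averages, leaving the vanishing of those averages as the only missing ingredient.

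Given this reduction, the corollary follows by bookkeeping. Fix $q \geq q(G,k)$ and a class $[\omega] \in \ell^{q,1}H^k(G)$. Using Theorem~3.3 of \cite{Pansu-Rumin} we may take $\omega$ to be smooth and in $L^1$, so the same form represents a class in $\ell^{q(G,k),1}H^k(G)$. At this critical exponent, Theorem \ref{defined} guarantees that the averaging pairing descends to cohomology, and Theorem \ref{vanish} says it vanishes identically. Hence $\int_G \omega \wedge \beta = 0$ for every closed left-invariant $(n-k)$-form $\beta$. Applying the \cite{BFP3} construction supplies an $L^q$ primitive of $\omega$, so $[\omega] = 0$ in $\ell^{q,1}H^k(G)$, as required.

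The substantive difficulty in this chain lies entirely upstream, in Theorem \ref{vanish} itself: one must show that averages vanish on all $\ell^{q,1}$ classes of abelian and Heisenberg groups at the critical exponent, a statement that demands a careful analysis of $L^q$ primitives on non-compact groups at the borderline case where the integrals defining the averages just barely converge. Once that theorem is in hand, the corollary is purely formal. The only mild verification is that the averaging pairing appearing in Theorem \ref{vanish} and the obstruction considered in \cite{BFP3} are indeed the same object, which is immediate from their common definition as $(\omega,\beta) \mapsto \int_G \omega \wedge \beta$ on smooth representatives.
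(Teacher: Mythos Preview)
Your proposal is correct and matches the paper's approach exactly: the corollary is deduced immediately from Theorem~\ref{vanish} together with the injectivity result of \cite{BFP3}, and the paper gives no further argument beyond that one-line combination. The only point worth making explicit is the passage from $q=q(G,k)$ to larger $q$: the paper notes (in the paragraph preceding Definition~\ref{jq}) that a primitive in $L^{q(G,k)}$ with sufficiently many horizontal derivatives lies in $L^\infty$ by Sobolev embedding, hence in every $L^{q}$ for $q\geq q(G,k)$, which is what justifies your claim that \cite{BFP3} supplies an $L^q$ primitive in this range.
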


This is sharp. It is shown in \cite{Pansu-Rumin} that $\ell^{q,1}H^k(G)\not=0$ if $q<q(G,k)$. Also, in top degree, not only is $\ell^{q(G,n),1}H^n(G)\not=0$, but the kernel of the averaging map $\ell^{q(G,n),1}H^n(G)\to\R=H^0(\mathfrak{g})^*$ does not vanish. This is in contrast with the results of \cite{BFP2} concerning $\ell^{q,p}H^n(G)$ for $p>1$, where nothing special happens in top degree. The results of \cite{BFP3} rely in an essential manner on analysis of the Laplacian on $L^1$, inaugurated by J. Bourgain and H. Brezis, \cite{Bourgain2007}, adapted to homogeneous groups by S. Chanillo and J. van Schaftingen, \cite{chanillo2009subelliptic}.

\subsection{Methods}

The Euclidean space $\R^n$ is \emph{$n$-parabolic}, meaning that there exist smooth compactly supported functions $\xi$ on $\R^n$ taking value $1$ on arbitrarily large balls, and whose gradient has an arbitrarily small $L^n$ norm. If $\omega$ is a closed $L^1$ form and $\beta$ a constant coefficient form, and if $\omega=d\psi$, $\psi\in L^{n'}$, Stokes theorem gives 
$$
|\int\xi\omega\wedge\beta|=|\int\psi\wedge d\xi\wedge\beta|\leq \|\psi\|_{n'}\|d\xi\|_{n}\|\beta\|_\infty
$$
which can be made arbitrarily small.

This argument extends to Carnot groups of homogeneous dimension $Q$, which are $Q$-parabolic. For this, one uses Rumin's complex, which has better homogeneity properties under Carnot dilations than de Rham's complex. When Rumin's complex is exactly homogeneous (e.g. for Heisenberg groups in all degrees, only for certain degrees in general), one gets a sharp exponent $q(G,k)$. This leads to Theorem \ref{defined}.

In Euclidean space, every constant coefficient form $\beta$ has a primitive $\alpha$ with linear coefficients (for instance, $dx_1\wedge\cdots\wedge dx_k=d(x_1\,dx_2\wedge\cdots\wedge dx_k)$). On the other hand, there exist cut-offs which decay like the inverse of the distance to the origin. Therefore
$$
|\int\xi\omega\wedge\beta|=|\int\omega\wedge d\xi\wedge\alpha|\leq \|\omega\|_{L^1(\text{supp}(d\xi))}
$$
which tends to $0$. This argument extends to Heisenberg groups in all but one degree. To complete the proof of Theorem \ref{vanish}, one performs the symmetric integration by parts, integrating $\omega$ instead of $\beta$. For this, one produces primitives of $\omega$ on annuli, of linear growth.

\subsection{Organization of the paper}

In section \ref{cut-off}, the needed cut-offs are constructed. Theorem \ref{defined} is proven in section \ref{descends}. In order to integrate $\omega\wedge\beta$ by parts, one needs understand the behaviour of wedge products in Rumin's complex on Heisenberg groups, this is performed in section \ref{wedge}. Section \ref{primitives} exploits the linear growth primitives of left-invariant forms. In section \ref{poincare}, controlled primitives of $L^1$ forms are designed, completing the proof of Theorem \ref{vanish}.

\section{Cut-offs on Carnot groups}
\label{cut-off}

\subsection{Annuli}

We first construct cut-offs with an $L^\infty$ control on derivatives. In a Carnot group $G$, we fix a subRiemannian metric and denote by $B(R)$ the ball with center $e$ and radius $R$. We fix an orthonormal basis of horizontal left-invariant vector fields $W_1,\ldots,W_{n_1}$. Given a smooth function $u$ on $G$, and an integer $m\in\N$, we denote by $\nabla^m u$ the collection of order $m$ horizontal derivatives $W_{i_1}\cdots W_{i_m}$, $(i_1,\ldots,i_m)\in \{1,\ldots,n_1\}^m$, and by $|\nabla^m u|^2$ the sum of their squares.

\begin{lemma}
Let $G$ be a Carnot group. Let $\lambda>1$. There exists $C=C(\lambda)$ such that for all $R>0$, there exists a smooth function $\xi_R$ such that
\begin{enumerate}
  \item $\xi_R=1$ on $B(R)$.
  \item $\xi_R=0$ outside $B(\lambda R)$.
  \item For all $m\in\N$, $|\nabla^m\xi_R|\leq C/R^m$.
\end{enumerate}
\end{lemma}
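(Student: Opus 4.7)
The plan is to reduce the problem to the single scale $R=1$ by exploiting the Carnot dilations $\delta_t$, under which the subRiemannian distance from the origin is $1$-homogeneous.

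First, I would construct one smooth compactly supported function $\xi_1$ on $G$ with $\xi_1\equiv 1$ on $\overline{B(1)}$ and support contained in $B(\lambda)$. Since $G$ is diffeomorphic to $\R^n$ via the exponential map, the closed sets $\overline{B(1)}$ and $G\setminus B(\lambda)$ are disjoint, so such a $\xi_1$ exists by the usual smooth Urysohn construction (for instance, by convolving the characteristic function of a CC-ball of intermediate radius with a smooth bump supported in a very small CC-ball, using the group convolution). Being smooth with compact support, $\xi_1$ has horizontal derivatives of every order that are continuous and compactly supported, hence bounded; set $C_m(\lambda):=\|\nabla^m\xi_1\|_\infty<\infty$.

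Next, I would define $\xi_R := \xi_1\circ\delta_{1/R}$. Since $\delta_{1/R}$ maps $B(R)$ onto $B(1)$ and $B(\lambda R)$ onto $B(\lambda)$, conditions (1) and (2) are immediate. For (3), I would use that each horizontal left-invariant vector field is homogeneous of degree $1$ under Carnot dilations, $(\delta_t)_* W_i = tW_i$, which yields the chain-rule identity $W_i(f\circ\delta_{1/R}) = \tfrac{1}{R}(W_i f)\circ\delta_{1/R}$. Iterating $m$ times gives $\nabla^m\xi_R = R^{-m}(\nabla^m\xi_1)\circ\delta_{1/R}$, whence $|\nabla^m\xi_R|\leq C_m(\lambda)\, R^{-m}$.

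There is no serious obstacle: once Carnot homogeneity is invoked, the statement is a direct scaling exercise. The only mildly delicate point is the construction of the seed function $\xi_1$, which uses only the smooth manifold structure on $G$ — the CC distance itself, which is not smooth, is never differentiated.
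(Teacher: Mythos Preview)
Your approach is exactly that of the paper: build a single cut-off $\xi_1$ at unit scale and then set $\xi_R=\xi_1\circ\delta_{1/R}$, using the homogeneity $(\delta_t)_*W_i=tW_i$ to propagate the derivative bounds. The paper's proof is a one-line sketch of precisely this reduction; your version simply fills in the routine details (the Urysohn-type construction of $\xi_1$ and the iterated chain rule), and your constants $C_m(\lambda)$ depending on $m$ are the honest reading of the statement.
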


\begin{proof}
We achieve this first when $R=1$, and then set $\xi_R=\xi_1\circ \delta_{1/R}$.
\end{proof}
\begin{lemma}\label{nabla}
Given $f$ a (vector valued) function which is homogeneous of degree $d\in\mathbb{N}$ under dilations, then $\nabla f$ is homogeneous of degree $d-1$.
\end{lemma}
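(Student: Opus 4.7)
The plan is to unpack the definition of homogeneity and differentiate both sides of the homogeneity identity using the chain rule, exploiting the fact that the horizontal left-invariant vector fields $W_i$ sit in the first layer of the stratification and therefore scale by $t$ under the Carnot dilation $\delta_t$.

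More concretely, I would first restate homogeneity as the identity $f\circ\delta_t=t^d f$ for every $t>0$. Next, I would record the key structural fact that, since each $W_i$ lies in $V_1$, one has $d\delta_t(W_i)_x = t\,(W_i)_{\delta_t(x)}$ for every $x\in G$; this is exactly the content of the Carnot dilation being a Lie algebra automorphism that acts as multiplication by $t^j$ on the $j$-th layer. Applying the chain rule to $W_i(f\circ\delta_t)(x)$ then yields
$$
W_i(f\circ\delta_t)(x) = df_{\delta_t(x)}\bigl(d\delta_t(W_i)_x\bigr) = t\,(W_i f)(\delta_t(x)) = t\,\bigl((W_i f)\circ\delta_t\bigr)(x).
$$
On the other hand, differentiating $f\circ\delta_t = t^d f$ directly gives $W_i(f\circ\delta_t) = t^d\,W_i f$. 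Equating the two expressions produces $(W_i f)\circ\delta_t = t^{d-1} W_i f$, which is exactly the statement that each horizontal derivative $W_i f$, and hence the full collection $\nabla f$, is homogeneous of degree $d-1$.

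There is really no obstacle here beyond being careful about the convention by which $\delta_t$ acts on horizontal vectors; once one notes that horizontal left-invariant vector fields are eigenvectors of $d\delta_t$ with eigenvalue $t$, the statement follows from a single application of the chain rule. I would also remark in passing that the argument iterates verbatim to give $|\nabla^m f|$ homogeneous of degree $d-m$, which is presumably the form in which the lemma is used in the sequel (for instance, to verify condition (3) of the previous lemma via the scaling $\xi_R=\xi_1\circ\delta_{1/R}$). The hypothesis $d\in\mathbb{N}$ plays no real role in the argument and could be replaced by an arbitrary real exponent.
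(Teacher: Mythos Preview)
Your proof is correct and is essentially the same as the paper's: both differentiate the identity $f\circ\delta_t=t^d f$ along a horizontal left-invariant field $W_i$, use the chain rule together with $d\delta_t(W_i)_x=t\,(W_i)_{\delta_t(x)}$, and compare the two sides to obtain $(W_i f)\circ\delta_t=t^{d-1}W_i f$. Your added remarks about iteration to $\nabla^m$ and the irrelevance of $d\in\mathbb{N}$ are correct side observations not present in the paper's proof.
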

\begin{proof}
Given $f\colon G\to\mathbb{R}$ homogeneous of degree $d$ under dilations, we have that $f(\delta_\lambda p)=\lambda^df(p)$.

By applying a horizontal derivative to the left hand side of the equation, namely $\nabla=W_{j}$ with $j\in\lbrace 1,\ldots,n_1\rbrace$, we get
\begin{align*}
    \nabla [f(\delta_\lambda p)]=W_j[f(\delta_\lambda p)]=df\circ d\delta_\lambda(W_j)_p=df\big(\lambda (W_j)_{\delta_\lambda p}\big)=\lambda\cdot df\big(W_j\big)_{\delta_\lambda p}\,.
\end{align*}

If we now apply $\nabla$ to the right hand side, we get
\begin{align*}
    \nabla[\lambda^df(p)]=W_j[\lambda^df(p)]=\lambda^d\cdot df(W_j)_p\,.
\end{align*}

We have therefore proved that $df\big\vert_{\delta_\lambda p}=\lambda^{d-1}\cdot df\big\vert_p$ when restricted to horizontal derivatives, so we finally get our result
\begin{align*}
    \nabla f(\delta_\lambda p)=\lambda^{d-1}\nabla f(p)\,.
\end{align*}
\end{proof}

\subsection{Parabolicity}

Second, we construct cut-offs with a sharper $L^Q$ control on derivatives.

Let $r$ be a smooth, positive function on $G\setminus\{e\}$ that is homogeneous of degree 1 under dilations (one could think of a CC-distance from the origin, but smooth) and let us define the following function
\begin{align}\label{chi function}
\chi(r)=\frac{\log(\lambda R/r)}{\log(\lambda R/R)}=\frac{\log(\lambda R/r)}{\log(\lambda )}\,.
\end{align}

One should notice that $\chi(\lambda R)=0$, $\chi(R)=1$, and that $\chi$ is smooth.

\begin{defin}\label{cutoff defin} Using the smooth function $\chi$ introduced in (\ref{chi function}), we can then define the cut-off function $\xi$ as follows
\begin{align*}
\xi(r)=\begin{cases}
    1,& \text{on } B(R)\\
    \chi(r),              & \text{on } B(\lambda R)\setminus B(R)\\ 0,& \text{outside } B(\lambda R)\,.
\end{cases}
\end{align*}
\end{defin}





\begin{lemma}\label{norms of derivative of xi}
The cut-off function $\xi$ defined above has the following property: for every integer $m\in\N$,
$\Vert\nabla^m\xi\Vert_{Q/m}\to 0$ as $\lambda\to\infty$.
\end{lemma}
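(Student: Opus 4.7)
The plan is to exploit the homogeneity of $r$ and of the logarithmic profile $\chi$ under the Carnot dilations. Since $\xi$ is constant on $B(R)$ and on $G\setminus B(\lambda R)$, all its horizontal derivatives are supported in the annulus $A=B(\lambda R)\setminus B(R)$, where $\xi=\chi(r)$ and $\chi^{(k)}(r)=\frac{(-1)^k(k-1)!}{r^k\,\log\lambda}$. The key intermediate estimate is the pointwise bound
$$|\nabla^m \xi(p)|\leq \frac{C_m}{r(p)^m\,\log\lambda}\quad\text{on }A.$$

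To prove this estimate, I would expand $\nabla^m(\chi\circ r)$ by the iterated chain/product rule for horizontal vector fields as a sum of terms of the form $\chi^{(k)}(r)\prod_{j=1}^{k}\nabla^{m_j}r$ with $m_1+\cdots+m_k=m$, up to universal combinatorial constants. Each such term carries a prefactor $1/\log\lambda$ coming from $\chi^{(k)}$, and $|\chi^{(k)}(r)|\leq C_k/r^k$. An iterated application of Lemma \ref{nabla} shows that $\nabla^{m_j}r$ is homogeneous of degree $1-m_j$ under dilations; since $r$ and its horizontal derivatives are smooth off the origin and the level set $\{r=1\}$ is compact, we get $|\nabla^{m_j}r|\leq C/r^{m_j-1}$. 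Multiplying all the factors together gives the claimed pointwise bound.

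The $L^{Q/m}$ integration is then a routine computation in Carnot polar coordinates. Because Haar measure on $G$ is homogeneous of degree $Q$ under $\delta_t$ and $r$ is homogeneous of degree $1$, a change of variable $p=\delta_t q$ (with $q\in\{r=1\}$) yields
$$\int_A\frac{dV(p)}{r(p)^Q}=|\{r=1\}|\int_R^{\lambda R}\frac{dt}{t}=c\,\log\lambda,$$
with $c>0$ independent of $R$ and $\lambda$. Combining this with the pointwise estimate,
$$\|\nabla^m\xi\|_{Q/m}^{Q/m}\leq \frac{C_m^{Q/m}}{(\log\lambda)^{Q/m}}\cdot c\,\log\lambda=C'_m\,(\log\lambda)^{1-Q/m},$$
so $\|\nabla^m\xi\|_{Q/m}\leq C''_m(\log\lambda)^{m/Q-1}\to 0$ as $\lambda\to\infty$ for $m<Q$.

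The step I expect to require the most care is the pointwise derivative estimate: horizontal left-invariant fields $W_i$ do not commute, so the classical Faà di Bruno formula does not apply verbatim. However, we only need an upper bound on $|\nabla^m\xi|$, and this can be obtained by induction on $m$: each application of a $W_i$ produces either a derivative of $\chi$ (contributing an extra $1/r$ and keeping the $1/\log\lambda$ prefactor) or a derivative of a previously appearing $\nabla^j r$ (whose homogeneous degree drops by $1$, as controlled by Lemma \ref{nabla}). Once this bookkeeping is in place, the remaining integration over $A$ is immediate, and the logarithmic gain $(\log\lambda)^{1-Q/m}$ is what makes the cut-off $Q$-parabolic.
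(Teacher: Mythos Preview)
Your argument is correct and follows essentially the same strategy as the paper: obtain a pointwise bound $|\nabla^m\xi|\leq C/(r^m\log\lambda)$ on the annulus via homogeneity, then integrate using that Haar measure scales like $r^{Q-1}\,dr$. The only difference is in how you reach the pointwise bound. You expand $\nabla^m(\chi\circ r)$ via an iterated chain rule and track the homogeneity of each factor $\nabla^{m_j}r$ separately. The paper sidesteps this bookkeeping entirely: since $\nabla\xi=\frac{1}{\log\lambda}\cdot\frac{\nabla r}{r}$ on the annulus, one sets $f=\nabla r/r$ (a vector-valued function homogeneous of degree $-1$) once and for all, so that $\nabla^m\xi=\frac{1}{\log\lambda}\,\nabla^{m-1}f$, and a single iterated application of Lemma~\ref{nabla} gives $|\nabla^{m-1}f|\leq C/r^m$. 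This completely avoids the non-commutativity and Fa\`a di Bruno issues you flag. Both routes yield $\|\nabla^m\xi\|_{Q/m}\leq C(\log\lambda)^{m/Q-1}$ and both require $m<Q$; the paper adds a short argument (which you omit) explaining why the orders $m$ actually arising in the application---the weights of $d_c$ on Rumin $k$-forms---always satisfy $m<Q$.
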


\begin{proof}

We compute
\begin{align*}
\nabla\xi=\begin{cases}
\frac{1}{\log\lambda}\frac{\nabla r}{r}&\text{if }R<r<\lambda R,\\
0&\text{otherwise.}
\end{cases}
\end{align*}
Let $f$ be the vector valued function $f=\frac{\nabla r}{r}$ on $G\setminus\{e\}$. Then $f$ is homogeneous of degree $-1$. According to Lemma \ref{nabla}, $\nabla^{m-1}f$ is homogeneous of degree $m$. It follows that
\begin{align*}
\int_{B(\lambda R)\setminus B(R)}\bigg\vert \nabla^{m-1}f\bigg\vert^{Q/m}&\le C\int_R^{\lambda R}\bigg\vert\frac{1}{r^m}\bigg\vert^{Q/m} r^{Q-1}dr\\
=C&\int_R^{\lambda R}\frac{r^{Q-1}}{r^Q}dr=C\log(\lambda).
\end{align*}
Therefore
$$
\|\nabla^m\xi\|_{Q/m}\leq C\,(\log\lambda)^{-1+(m/Q)}
$$
tends to $0$ as $\lambda$ tends to infinity, provided $m<Q$.
Let us stress that, in general, for values of $m$ greater than or equal to $Q$, the final limit will not be zero. However, the values of $m$ which we will be considering are very specific.

This estimate will in fact be used in the proof of Proposition \ref{w-j}, and in that setting the degrees $m$ that can arise will be all the possible degrees (or equivalently weights) of the differential $d_c$ on an arbitrary Rumin $k$-form $\phi$ of weight $w$. If we denote by $M$ the maximal $m$ that could arise in this situation, then one can show that $M<Q$.

Let us first assume that the maximal order $M$ for the $d_c$ on $k$-forms (which is non trivial for $0\le k<n$) is greater or equal to $Q$. Then, given $\phi$ a Rumin $k$-form of weight $w$, then $d_c\phi=\sum_{i=1}^M\beta_{w+i}$, where each $\beta_{w+i}$ is a Rumin $(k+1)$-form of weight $w+i$. If we consider the Hodge of the $\beta_{w+i}$ with $Q\le i\le M$, then these forms are Rumin $(n-k-1)$-forms of weight $Q-(w+i)=Q-w-i\le Q-w-Q<-w\le 0$, which is impossible.

Therefore $M<Q$, which means that indeed in all the cases that we will take into consideration, the $L^{Q/m}$ norm of $\nabla^m\xi$ will always go to zero as $\lambda\to \infty$.

\end{proof}

\begin{oss}
One says that a Riemannian manifold $M$ is \emph{$p$-parabolic} (see \cite{Troyanov}) if for every compact set $K$, there exist smooth compactly supported functions on $M$ taking value $1$ on $K$ whose gradient has an arbitrarily small $L^p$ norm. The definition obviously extends to subRiemannian manifolds.

Lemma \ref{norms of derivative of xi} implies that a Carnot group of homogeneous dimension $Q$ is $Q$-parabolic.
\end{oss}

\section{The averaging map in general Carnot groups descends to cohomology}
\label{descends}

\begin{defin}
Let $G$ be a Carnot group of dimension $n$ and homogeneous dimension $Q$. For $k=1,\ldots,n$, let $\mathcal{W}(k)$ denote the set of weights arising in Rumin's complex in degree $k$. For a Rumin $k$-form $\omega$, let
$$
\omega=\sum_{w\in\mathcal{W}(k)} \omega_{w}
$$ be its decomposition into components of weight $w$.

Let $d_c=\sum_j d_{c,j}$ be the decomposition of $d_c$ into weights/orders. Let $\mathcal{J}(k,w)$ denote the set of weights/orders $j$ such that $d_{c,j}$ on $k$-forms of weight $w$ is nonzero, in other words,
\begin{align*}
    \mathcal{J}(k,w):=\lbrace j\in\mathbb{N}\mid d_{c,j}\omega_w\neq 0 \text{ for some }\omega\text{ of degree }k\rbrace\,.
\end{align*}

We will denote by $\mathcal{J}(k)$ the set of all the possible weights/orders, that is 
\begin{align*}
    \mathcal{J}(k)=\bigcup_{w\in \mathcal{W}(k)}\mathcal{J}(k,w)\,.
\end{align*}

Let us define
 $L^{\chi(k)}$ as follows
\begin{align*}
    L^{\chi(k)}=\lbrace \phi=\sum_{w\in\mathcal{W}(k-1)}\phi_w\in E_0^{k-1}\mid \forall j\in\mathcal{J}(k-1,w)\,,\phi_{w}\in L^{Q/Q-j}\rbrace
\end{align*}
and if $\mathcal{J}(k-1,\hat{w})=\emptyset$ for some $\hat{w}$, then we don't require anything on $\phi_{\hat{w}}$.
\end{defin}

\begin{lemma}
Let $G$ be a Carnot group. Fix a left-invariant subRiemannian metric making the direct sum $\mathfrak{g}=\bigoplus \mathfrak{g}_i$ orthogonal. The $L^2$-adjoint $d_c^*$ of $d_c$ is a differential operator. Fix a degree $k$. Let 
$$
d_c=\sum_{j\in\mathcal{J}(k)} d_{c,j}
$$
be the decomposition of $d_c$ into weights ($d_{c,j}$ increases weights of Rumin forms by $j$, hence it has horizontal order $j$). Then the decomposition of $d_c^*$ on a $(k+1)$-form into weights/orders is
$$
d_c^*=\sum_{j\in\mathcal{J}^\ast(k)} d_{c,j}^*.
$$

In other words, the adjoint $d_{c,j}^*$ of $d_{c,j}$ decreases weights by $j$ and has horizontal order $j$. In fact, if we denote by $\mathcal{J}^*(k+1,\tilde{w})$ the set of weights/orders $j$ such that $d_{c,j}^*$ on $(k+1)$-forms of weight $\tilde{w}$ is non-zero, that is
\begin{align*}
    \mathcal{J}^\ast(k+1,\tilde{w})=\lbrace j\in\mathbb{N}\mid d_{c,j}^*\alpha_{\tilde{w}}\neq 0\text{ for some }\alpha\text{ of degree }k+1\rbrace\,,
\end{align*}
then there is a clear relationship between the sets of indices $\mathcal{J}(k,w)$ and $\mathcal{J}^*(k+1,\tilde{w})$, namely
\begin{align*}
    \mathcal{J}^\ast(k+1,\tilde{w})=\bigcup_{w\in\mathcal{W}(k)}\lbrace j\in\mathcal{J}(k,w)\mid w+j=\tilde{w}\rbrace\,.
\end{align*}

And from this relationship, we get directly the following identity:
\begin{align*}
    \mathcal{J}^*(k+1)=\bigcup_{\tilde{w}\in\mathcal{W}(k+1)}\mathcal{J}^*(k+1,\tilde{w})=\mathcal{J}(k)\,.
\end{align*}

Moreover, since the formula $d_c^\ast=(-1)^{n(k+1)+1}\ast d_c\ast$ applies to any Rumin $k$-form, we also have the equality $\mathcal{J}^\ast(n-k,Q-w)=\mathcal{J}(k,w)$.

{Let us stress that this also implies $\mathcal{J}(k)=\mathcal{J}^\ast(n-k)=\mathcal{J}(n-k-1)$}. 

\end{lemma}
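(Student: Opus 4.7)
The proof is essentially bookkeeping driven by two structural facts about Rumin's complex: the decomposition $d_c = \sum_j d_{c,j}$ where $d_{c,j}$ raises weight by exactly $j$ and has horizontal order exactly $j$, and the intertwining identity $d_c^\ast = (-1)^{n(k+1)+1}\ast d_c \ast$ together with the fact that the Rumin Hodge star sends a $k$-form of weight $w$ to an $(n-k)$-form of weight $Q-w$ and is a $C^\infty(G)$-linear pointwise isomorphism.

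The plan is to first observe that $d_c^\ast$ is a differential operator: since $\ast$ is algebraic and $d_c$ is a differential operator, the formula $d_c^\ast = \pm\, \ast d_c \ast$ immediately exhibits $d_c^\ast$ as a composition of a differential operator with two algebraic isomorphisms. Next, conjugating the weight decomposition $d_c = \sum_j d_{c,j}$ by $\ast$ yields $d_c^\ast = \sum_j (-1)^{n(k+1)+1}\ast d_{c,j}\ast$; declare $d_{c,j}^\ast := (-1)^{n(k+1)+1}\ast d_{c,j}\ast$. Then, tracing weights: if $\alpha$ is a $(k+1)$-form of weight $\tilde w$, then $\ast\alpha$ has weight $Q-\tilde w$, the component $d_{c,j}(\ast\alpha)$ has weight $Q-\tilde w+j$, and the final $\ast$ produces a $k$-form of weight $\tilde w-j$. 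Since $d_{c,j}$ has horizontal order $j$ and $\ast$ is algebraic, $d_{c,j}^\ast$ has horizontal order $j$ as well. Uniqueness of the weight/order decomposition then forces this to be the weight decomposition of $d_c^\ast$.

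For the set identities, the key observation is the adjointness criterion: $j\in\mathcal{J}^\ast(k+1,\tilde w)$ iff there exists a $(k+1)$-form $\alpha_{\tilde w}$ of weight $\tilde w$ with $d_{c,j}^\ast\alpha_{\tilde w}\neq 0$. Using the intertwining formula, this is equivalent to $d_{c,j}(\ast\alpha)_{Q-\tilde w}\neq 0$, i.e. to the existence of a $(n-k-1)$-form of weight $Q-\tilde w$ on which $d_{c,j}$ is nonzero. Equivalently, by adjointness/duality in the Rumin spaces, it says that $d_{c,j}$ takes some $k$-form of weight $\tilde w-j$ to a nonzero $(k+1)$-form, which is precisely $j\in\mathcal{J}(k,\tilde w-j)$. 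Setting $w=\tilde w-j$ gives the stated union formula $\mathcal{J}^\ast(k+1,\tilde w)=\bigcup_{w\in\mathcal{W}(k)}\{j\in\mathcal{J}(k,w)\mid w+j=\tilde w\}$, and unioning over $\tilde w$ collapses both sides to $\mathcal{J}^\ast(k+1)=\mathcal{J}(k)$.

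The identity $\mathcal{J}^\ast(n-k,Q-w)=\mathcal{J}(k,w)$ follows directly from the Hodge star formula for $d_c^\ast$ together with the weight shift $w\mapsto Q-w$ under $\ast$; indeed, the $j$-th component on the left side corresponds exactly to the $j$-th component of $d_c$ on $k$-forms of weight $w$ on the right. Chaining these gives $\mathcal{J}(k)=\mathcal{J}^\ast(n-k)=\mathcal{J}(n-k-1)$, the first equality coming from $\mathcal{J}^\ast(k+1)=\mathcal{J}(k)$ applied with $k+1\mapsto n-k$. The only subtle point — and the main thing to watch — is sign and index conventions in the intertwining formula and keeping weight shifts consistent between the $k$ and $k+1$ (respectively $n-k$ and $n-k-1$) sides, but no analytic difficulty arises; the statement is purely a consequence of the weight grading of the Rumin complex and Poincaré duality at the level of the Rumin star.
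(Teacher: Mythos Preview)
Your argument is correct and matches the paper's approach: the paper states this lemma without a separate proof, the justification being embedded in the statement itself via the Hodge-star formula $d_c^\ast=\pm\ast d_c\ast$ and precisely the weight bookkeeping you carry out. One small slip in your final paragraph: substituting $k+1\mapsto n-k$ into $\mathcal{J}^\ast(k+1)=\mathcal{J}(k)$ gives $\mathcal{J}^\ast(n-k)=\mathcal{J}(n-k-1)$, which is the \emph{second} equality in the chain $\mathcal{J}(k)=\mathcal{J}^\ast(n-k)=\mathcal{J}(n-k-1)$; the first equality comes instead from taking the union over $w\in\mathcal{W}(k)$ of the Hodge-star identity $\mathcal{J}^\ast(n-k,Q-w)=\mathcal{J}(k,w)$ that you established just before.
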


\begin{prop}
\label{w-j}
If $\omega$, $\phi$, $\beta$ are Rumin forms with $\omega\in L^1$ of degree $k$, $\beta$ left-invariant of complementary degree $n-k$, $\phi\in L^{\chi(k)}$ and $d_c \phi=\omega$, then
$$
\int_G \omega\wedge\beta=0.
$$
\end{prop}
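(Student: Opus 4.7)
The plan is to truncate $\int_G\omega\wedge\beta$ by the parabolicity cut-off $\xi=\xi_{R,\lambda}$ of Definition~\ref{cutoff defin} and then integrate by parts so that every horizontal derivative lands on $\xi$. Since $\omega\wedge\beta\in L^1(G)$ (because $\omega\in L^1$ and $\beta$ is bounded as a left-invariant form) and $\xi_{R,\lambda}\to 1$ pointwise on $G$ as $R\to\infty$ (with $\lambda$ fixed), dominated convergence gives
$$\int_G\omega\wedge\beta=\lim_{R\to\infty}\int_G\xi_{R,\lambda}\,\omega\wedge\beta,$$
so it will suffice to show the truncated integral is $o(1)$ as $R\to\infty$.

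Decomposing $\phi=\sum_w\phi_w$ and $d_c=\sum_j d_{c,j}$ into weight components turns the truncated integral into the finite sum
$$\sum_{w\in\mathcal{W}(k-1)}\sum_{j\in\mathcal{J}(k-1,w)}\int_G\xi\,(d_{c,j}\phi_w)\wedge\beta.$$
Each $d_{c,j}$ is a left-invariant horizontal differential operator of order $j$, hence in a left-invariant coframe it is a constant-coefficient combination $\sum A_IW^I$ with $|I|=j$. Expanded against the left-invariant $\beta$, the wedge product $d_{c,j}\phi_w\wedge\beta$ becomes a constant-coefficient combination of terms of the form $(W^I\phi_{w,A})\,d\mathrm{vol}$, where $\phi_{w,A}$ are the scalar coefficients of $\phi_w$ in the coframe. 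Scalar integration by parts against $\xi$, with $W_i$ anti-selfadjoint for Haar measure, then transfers all $j$ derivatives onto $\xi$ with no residual lower-order terms, and a pointwise bound followed by H\"older with conjugate exponents $(Q/(Q-j),\,Q/j)$ yields
$$\biggl|\int_G\xi\,(d_{c,j}\phi_w)\wedge\beta\biggr|\leq C\,\|\phi_w\|_{L^{Q/(Q-j)}(B(\lambda R)\setminus B(R))}\,\|\nabla^j\xi\|_{Q/j}\,\|\beta\|_\infty.$$

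To close the argument, Lemma~\ref{norms of derivative of xi} makes $\|\nabla^j\xi\|_{Q/j}$ finite (and $R$-invariant by Carnot scaling) for every $j\in\mathcal{J}(k-1)$, thanks to the bound $j<Q$ established at the end of its proof; while the hypothesis $\phi\in L^{\chi(k)}$ supplies $\phi_w\in L^{Q/(Q-j)}(G)$, so by absolute continuity of the integral the annular norm of $\phi_w$ tends to $0$ as $R\to\infty$, since the support of $\nabla^j\xi$ escapes to infinity. Summing over the finite index set gives the required $o(1)$ bound, completing the proof.

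The main obstacle is the integration-by-parts step: one must exploit that $d_{c,j}$ has constant coefficients in a left-invariant frame so that, when combined with the wedge against the left-invariant $\beta$, every horizontal derivative can be transferred to $\xi$ without producing lower-order commutator remainders. The definition of $L^{\chi(k)}$ is then visibly engineered so that the H\"older conjugate of the $L^{Q/j}$ parabolicity estimate of Lemma~\ref{norms of derivative of xi} is exactly the $L^{Q/(Q-j)}$ integrability class required of $\phi_w$.
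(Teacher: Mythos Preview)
Your proof is correct and follows essentially the same strategy as the paper: multiply by the cut-off $\xi$, integrate by parts so that the $j$ horizontal derivatives fall on $\xi$, and apply H\"older with exponents $(Q/(Q-j),Q/j)$ matched to the definition of $L^{\chi(k)}$. The only cosmetic differences are that the paper packages the integration by parts via the $L^2$-adjoint $d_c^*$ rather than moving each $W_i$ by hand, and that the paper concludes by sending $\lambda\to\infty$ (invoking the parabolicity statement $\|\nabla^j\xi\|_{Q/j}\to 0$ of Lemma~\ref{norms of derivative of xi}) whereas you keep $\lambda$ fixed and send $R\to\infty$, using instead that the annular norm $\|\phi_w\|_{L^{Q/(Q-j)}(B(\lambda R)\setminus B(R))}\to 0$; both limits are legitimate and yield the same conclusion.
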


\textbf{Proof}. Without loss of generality, one can assume that $\beta$ has pure weight $Q-w$ for some $w\in\mathcal{W}(k)$. Then its Hodge-star $\ast\beta$ has pure weight $w$. Let $\xi$ be a smooth cut-off. By definition of the $L^2$-adjoint, we have
$$
\int_{G}(d_c\phi)\wedge\xi\beta=\int_{G}\langle d_c\phi ,\ast\xi\beta\rangle\,dvol=\int_{G}\langle\phi,d_c^*(\ast\xi\beta)\rangle\,dvol=\sum_{j\in\mathcal{J}^*(k,w)}\int_{G}\langle\phi_{w-j},d_{c,j}^*(\ast\xi\beta)\rangle\,dvol.
$$

Since $\phi\in L^{\chi(k)}$, for any $j\in\mathcal{J}^\ast(w-j)$ we have $\phi_{w-j}\in L^{Q/Q-j}$, by definition of $L^{\chi(k)}$. Hence, applying H\"older's inequality, we obtain
$$
|\int_{G}\omega\wedge\xi\beta|\leq \sum_{j\in\mathcal{J}^*(k,w)}\|\phi_{w-j}\|_{Q/(Q-j)}\|\nabla^j\xi\|_{Q/j}\|\beta\|_{\infty}.
$$

It is therefore sufficient to take $\xi$ as the cut-off function introduced in Definition \ref{cutoff defin}, so that by Lemma \ref{norms of derivative of xi} we get that $\int_{G}\omega\wedge\beta=0$.

\begin{ese}
Euclidean space $\R^n$. Then $\mathcal{W}(k)=\{k\}$ and $\mathcal{J}(k)=\{1\}$ in all degrees. Theorem \ref{w-j} states that the averaging map descends to $L^{q,1}$-cohomology, where $q=\frac{n}{n-1}$.
\end{ese}

\begin{ese}
Heisenberg groups $\mathbb{H}^{2m+1}$. We have that $\mathcal{W}(k)=\{k\}$ for $k\le m$, and $\mathcal{W}(k)=\{k+1\}$ when $k\geq m+1$. $\mathcal{J}(k)=\{1\}$ in all degrees but $k=m$, where $\mathcal{J}(m)=\lbrace 2\rbrace$, so that Theorem \ref{w-j} states that the averaging map descends to $L^{q,1}$-cohomology, where $q=\frac{Q}{Q-2}$ in degree $m+1$ and $q=\frac{Q}{Q-1}$ in all other degrees.
\end{ese}

\subsection{Link with $\ell^{q,1}$ cohomology}

Let $1\leq p\leq q\leq\infty$. According to Theorem 3.3 of \cite{Pansu-Rumin}, every $\ell^{q,p}$ cohomology class of a Carnot group contains a form $\omega$ which belongs to $L^p$ as well as an arbitrary finite number of its derivatives. If the class vanishes, then there exists a primitive $\phi$ of $\omega$ which belongs to $L^q$ as well as an arbitrary finite number of its derivatives. There exists a homotopy between de Rham and Rumin's complex given by differential operators, therefore the same statement applies to Rumin's complex. In particular, Rumin forms can be used to compute $\ell^{q,p}$ cohomology.

Let $\omega$ be a Rumin $k$-form which belongs to $L^1$ as well as a large number of its  horizontal derivatives. Assume that $\omega$ represents the trivial cohomology class. Then there exists a Rumin $(k-1)$-form $\phi$ which belongs to $L^q$ as well as its horizontal derivatives up to order $Q$, and such that $d_c\phi=\omega$. By Sobolev's embedding theorem, $\phi$ belongs to $L^\infty$, hence to $L^{q'}$ for all $q'\geq q$. This suggests the following notation.

\begin{defin}
\label{jq}
Let $G$ be a Carnot group of dimension $n$ and homogeneous dimension $Q$. Let $1\leq k\leq n$. Define
$$
j(k)=\min\bigcup_{w\in \mathcal{W}(k)} \mathcal{J}(k-1,w).
$$
and
$$
q(G,k):=\frac{Q}{Q-j(k)}.
$$
\end{defin}

\textbf{Proof of Theorem \ref{defined}}.
\begin{proof}

Let $G$ be a Carnot group. Let $\omega$ be a Rumin $k$-form on $G$, which belongs to $L^1$ as well as a large number of its derivatives. Assume that $\omega=d_c\phi$ with $\phi\in L^{q(G,k)}$. Then  
$$
\forall w\in \mathcal{W}(k),\,\forall j\in\mathcal{J}(k-1,w),\quad \phi_{w-j} \in L^{Q/(Q-j)},
$$
therefore $\phi\in L^{\chi(k)}$. Proposition \ref{w-j} implies that averages $\int\omega\wedge\beta$ vanish. This completes the proof of Theorem \ref{defined}.

\end{proof}
\begin{ese}
Euclidean space. Then $j(k)=1$ in all degrees.
\end{ese}

\begin{ese}
Heisenberg groups $\mathbb{H}^{2m+1}$. Then $j(k)=1$ in all degrees but $k=m+1$, where $j(m+1)=2$.
\end{ese}
For these examples, as we saw before, one need not invoke \cite{Pansu-Rumin} since $\mathcal{J}(k)$ has only one element in each degree.

\begin{ese}
Engel group $E^4$. Then $j(k)=1$ in degrees $1$ and $4$, $j(k)=2$ in degrees $2$ and $3$. One concludes that the averaging map is well-defined in $\ell^{q,1}$ cohomology for $q\leq\frac{Q}{Q-1}$ in degrees $1$ and $4$, and for $q\leq\frac{Q}{Q-2}$ in degrees $2$ and $3$. Here, $Q=7$.
\end{ese}

\subsection{Results for Heisenberg groups $\mathbb{H}^{2m+1}$}
In \cite{BFP3}, it is proven that every closed $L^1$ $k$-form, $k\leq 2m$, whose averages $\int\omega\wedge \beta$ vanish, is the differential of a form in $L^q$, where $q=q(k)=\frac{Q}{Q-1}$ unless when $k=m+1$, where $q(m+1)=\frac{Q}{Q-2}$. In other words,

\begin{theorem}[\cite{BFP3}]
Let $G=\mathbb{H}^{2m+1}$ and let $k=1,\ldots,2m$. The averaging map $L^{q(k),1}H^k(G)\to H^{2m+1-k}(\mathfrak{g})^*$ is injective.
\end{theorem}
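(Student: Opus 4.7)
The plan is to construct, for each closed $L^1$ Rumin $k$-form $\omega$ whose averages $\int_G\omega\wedge\beta$ all vanish, a primitive $\phi\in L^{q(k)}$ satisfying $d_c\phi=\omega$. This is precisely the assertion that $[\omega]=0$ in $\ell^{q(k),1}H^k(G)$, and hence yields injectivity of the averaging map.

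First I would work with the Rumin Laplacian $\Delta_c$ and its Green operator $G$, defined weight by weight so that on each weight component $\Delta_c$ acts as a maximally hypoelliptic operator of the appropriate horizontal order. Set $\phi:=d_c^\ast G\omega$. Using that $\omega$ is closed, one obtains $d_c\phi=\omega-h$, where $h$ is a left-invariant harmonic remainder coming from the harmonic projection. By Hodge/Lie-algebra duality, $h$ vanishes precisely when all pairings $\int_G\omega\wedge\beta$ with closed left-invariant forms $\beta$ vanish; so the hypothesis on averages forces $d_c\phi=\omega$ identically, and what remains is the analytic step of proving $\phi\in L^{q(k)}$.

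That step is the heart of the matter, and it is where the Bourgain--Brezis analysis of the Laplacian on $L^1$, in the form developed for homogeneous groups by Chanillo and van Schaftingen \cite{chanillo2009subelliptic}, enters. Naive elliptic regularity for $\Delta_c^{-1}$ loses the endpoint Sobolev exponent at $p=1$, giving only weak-$L^{q(k)}$; but on $L^1$ data satisfying a differential constraint (here, the closedness of $\omega$), the composite $d_c^\ast G$ recovers the endpoint exponent. Weight by weight, the gain is dictated by the horizontal order of the corresponding component of $d_c$, so the final bound $\|d_c^\ast G\omega\|_{q(k)}\lesssim\|\omega\|_1$ reproduces exactly $q(k)=Q/(Q-j(k))$.

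The main obstacle is the inhomogeneity of $d_c$ across weights, concentrated in middle degree $k=m+1$ where $j(m+1)=2$. There the Bourgain--Brezis gain has to be applied to a second-order operator, requiring the Chanillo--van Schaftingen estimate in the form giving two Sobolev steps rather than one, and one must verify that the weight decomposition of Rumin's complex on $\mathbb{H}^{2m+1}$ is compatible with the differential-constraint hypotheses of that theorem. Once this bookkeeping is arranged, and once the harmonic obstruction produced by $G$ is identified, via Poincar\'e duality on $\mathfrak g$, with the averaging pairing, the proof proceeds uniformly across all degrees $1\leq k\leq 2m$.
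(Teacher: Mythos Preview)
This theorem is not proved in the present paper: it is quoted verbatim from \cite{BFP3}, and the paper only remarks that ``the results of \cite{BFP3} rely in an essential manner on analysis of the Laplacian on $L^1$, inaugurated by J.~Bourgain and H.~Brezis, adapted to homogeneous groups by S.~Chanillo and J.~van Schaftingen.'' So there is no in-paper proof to compare your proposal against; your sketch is in the right spirit insofar as it invokes exactly the Bourgain--Brezis/Chanillo--van Schaftingen machinery that the paper attributes to \cite{BFP3}.

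That said, one step in your outline is not correct as written and would not survive scrutiny. You claim that on $\mathbb{H}^{2m+1}$ the Green operator produces a decomposition $d_c(d_c^\ast G\omega)=\omega-h$ with $h$ a \emph{left-invariant} harmonic remainder, and that vanishing of the averages kills $h$. This is the picture on a compact manifold, but on a noncompact group there is no Hodge projection onto left-invariant forms: left-invariant forms are not in any $L^p$, $p<\infty$, and the convolution Green operator satisfies $\Delta_c G=\delta$ as distributions, with no harmonic correction term. The role of the averages is not to annihilate a harmonic piece of $\omega$; rather, they are moment conditions at spatial infinity (equivalently, at frequency zero) which govern whether the convolution $d_c^\ast G\ast\omega$ lands in $L^{q(k)}$ at all, and which are needed to feed the differential-constraint hypothesis of the Chanillo--van Schaftingen estimate in the correct form. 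If you rewrite that paragraph so that the averages enter as the obstruction to the endpoint Sobolev bound (or to a suitable Hardy-space condition) rather than as a harmonic projector, the outline becomes a faithful summary of the strategy in \cite{BFP3}.
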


The goal of subsequent sections is to prove that the image of averaging map is $0$ in all degrees $k\leq 2m$. This will prove that $L^{q(k),1}H^k(G)=0$. Note that for $k=2m+1$, both facts fail: the averaging map is not zero (one can check with compactly supported forms) and it is not injective either (see \cite{BFP3}).

\section{Wedge products between Rumin forms in Heisenberg groups}
\label{wedge}

We shall rely on Stokes formula on Heisenberg groups $\mathbb{H}^{2m+1}$. We need a formula of the form $d(\phi\wedge\beta)=(d_c\phi)\wedge\beta\pm \phi\wedge d_c\beta$. This does not always hold in general for Carnot groups. In fact, the complex of Rumin forms $ E_0^\bullet$ equals the Lie algebra cohomology $H^\bullet(\mathfrak{g})$, and therefore carries a natural cup product induced by the wedge product, but which in general differs from the wedge product. 

Let us take into consideration the original construction of the Rumin complex in the $(2m+1)$-dimensional Heisenberg group $\mathbb{H}^{2m+1}$ as appears in \cite{rumin1994}.

Given $\Omega^\bullet$ the algebra of smooth differential forms, one can define the following two differential ideals:
\begin{itemize}
\item $\mathcal{I}^\bullet:=\lbrace \alpha=\gamma_1\wedge\tau+\gamma_2\wedge d\tau\rbrace$, the differential ideal generated by the contact form $\tau$, and
\item $\mathcal{J}^\bullet:=\lbrace\beta\in\Omega^\bullet\mid\beta\wedge\tau=\beta\wedge d\tau=0\rbrace$.
\end{itemize}

\begin{oss}\label{wedge between ideals}
By construction, the ideal $\mathcal{J}^\bullet$ is in fact the annihilator of $\mathcal{I}^\bullet$. In other words, given two arbitrary forms $\alpha\in\mathcal{J}^\bullet$ and $\beta\in\mathcal{I}^\bullet$, we have $\alpha\wedge\beta$=0.
\end{oss}

One can quickly check that the subspaces $\mathcal{J}^h=\mathcal{J}^\bullet\cap\Omega^h$ are non-trivial for $h\ge m+1$, whereas the quotients $\Omega^h/\mathcal{I}^h$ are non-trivial for $h\le m$, where $\mathcal{I}^h=\mathcal{I}^\bullet\cap\Omega^h$.

Moreover, the usual exterior differential descends to the quotients $\Omega^\bullet/\mathcal{I}^\bullet$ and restricts to the subspaces $J^\bullet$ as first order differential operators:
\begin{align*}
d_c:\Omega^\bullet/\mathcal{I}^\bullet\to\Omega^\bullet/\mathcal{I}^\bullet\;\text{ and }\;d_c:\mathcal{J}^\bullet\to\mathcal{J}^\bullet\,.
\end{align*}

In \cite{rumin1994} Rumin then defines a second order linear differential operator
\begin{align*}
d_c:\Omega^m/\mathcal{I}^m\to\mathcal{J}^{m+1}
\end{align*}
which connects the non-trivial quotients $\Omega^\bullet/\mathcal{I}^\bullet$ with the non-trivial subspaces $\mathcal{J}^\bullet$ into a complex, that is $d_c\circ d_c=0$,
\begin{align*}
\Omega^0/\mathcal{I}^0\xrightarrow{d_c}\Omega^1/\mathcal{I}^1\xrightarrow{d_c}\cdots\xrightarrow{d_c}\Omega^m/\mathcal{I}^m\xrightarrow{d_c}\mathcal{J}^{m+1}\xrightarrow{d_c}\mathcal{J}^{m+2}\xrightarrow{d_c}\cdots\xrightarrow{d_c}\mathcal{J}^{2m+1}\,.
\end{align*}

\begin{prop}
\label{dc of Wedge Product}
In $\mathbb{H}^{2m+1}$, the wedge product of Rumin forms is well-defined and satisfies the Leibniz rule 
\begin{align*}
d_c(\alpha\wedge\beta)=d_c\alpha\wedge\beta+(-1)^h\alpha\wedge d_c\beta
\end{align*}
if either $h\ge m+1$ or $k\ge m+1$ or $h+k< m$, where $h=deg(\alpha)$ and $k=deg(\beta)$.
\end{prop}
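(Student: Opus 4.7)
The plan is to treat the three cases of the hypothesis separately, and within each to first verify that the wedge product lands in the correct Rumin space and is independent of the choices of representatives, and then to establish the Leibniz formula. The key technical inputs are Remark \ref{wedge between ideals}, that forms in $\mathcal{J}^\bullet$ annihilate all of $\mathcal{I}^\bullet$, together with the fact that $\mathcal{I}^\bullet$ is a two-sided ideal.

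In the case $h+k<m$, both $\alpha$ and $\beta$ are represented by forms in $\Omega^h$ and $\Omega^k$ modulo $\mathcal{I}^h$ and $\mathcal{I}^k$. The ideal property of $\mathcal{I}^\bullet$ makes the wedge of lifts descend to a well-defined class in $\Omega^{h+k}/\mathcal{I}^{h+k}$, which is the Rumin space in that degree because $h+k<m$. Since the integers $h$, $k$, and $h+k+1$ are all at most $m$, the operator $d_c$ on each of the three relevant spaces is induced by the ordinary exterior derivative $d$; one simply lifts, applies the usual Leibniz rule for $d$, and projects.

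In the case $h\ge m+1$, the dimension constraint $h+k\le 2m+1$ forces $k\le m$. Then $\alpha\in\mathcal{J}^h$ and $\beta$ is a class in $\Omega^k/\mathcal{I}^k$. Independence of the lift $\tilde\beta$ follows at once from $\alpha\wedge\mathcal{I}^k=0$. The identities $(\alpha\wedge\tilde\beta)\wedge\tau=(-1)^k(\alpha\wedge\tau)\wedge\tilde\beta=0$ and analogously for $d\tau$ place $\alpha\wedge\beta$ inside $\mathcal{J}^{h+k}$, the Rumin space in that degree. Since $d_c$ on $\mathcal{J}^{h+k}$ is just the restriction of $d$, the ordinary Leibniz rule gives $d(\alpha\wedge\tilde\beta)=d\alpha\wedge\tilde\beta+(-1)^h\alpha\wedge d\tilde\beta$, and $d\alpha=d_c\alpha$ because $\mathcal{J}^\bullet$ is stable under $d$. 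The delicate term is $\alpha\wedge d\tilde\beta$ when $k=m$, since then $d_c\beta=d(\tilde\beta+\tau\wedge\eta)$ for an auxiliary $\eta\in\Omega^{m-1}$ chosen so that the sum lies in $\mathcal{J}^{m+1}$; here the correction terms $\alpha\wedge d\tau\wedge\eta$ and $\alpha\wedge\tau\wedge d\eta$ vanish because $\alpha\wedge\tau=\alpha\wedge d\tau=0$, so $\alpha\wedge d\tilde\beta=\alpha\wedge d_c\beta$. The case $k\ge m+1$ is symmetric.

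The main obstacle is precisely this degree-$m$ transition, and it is handled exactly by the $\mathcal{J}$-annihilation property. The entire point of the three hypotheses is to ensure that whenever Rumin's second-order operator enters the picture with its auxiliary $\eta$, at least one factor lies in $\mathcal{J}^\bullet$ and absorbs the correction terms; in the excluded ranges neither factor has this annihilation property and the naive Leibniz rule would pick up genuine non-vanishing contributions involving $\eta$.
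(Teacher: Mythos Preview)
Your proof is correct and follows essentially the same approach as the paper: separate into cases according to which factors lie in the quotient $\Omega^\bullet/\mathcal{I}^\bullet$ and which in $\mathcal{J}^\bullet$, use the ideal property of $\mathcal{I}^\bullet$ and the annihilation property of $\mathcal{J}^\bullet$ (Remark~\ref{wedge between ideals}) to show the wedge is well-defined, and then check that the ordinary Leibniz rule for $d$ descends.

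The one place you work harder than necessary is the subcase $h\ge m+1$, $k=m$ (and its mirror image). You carefully unwind Rumin's second-order operator, writing $d_c\beta=d(\tilde\beta+\tau\wedge\eta)$ and showing that the $\eta$-correction terms are killed by $\alpha\wedge\tau=\alpha\wedge d\tau=0$. This is correct, but the paper observes instead that in this situation $h+k\ge 2m+1$, so $\alpha\wedge\beta$ is a top form, $d_c(\alpha\wedge\beta)=0$, and both terms on the right-hand side have degree $\ge 2m+2$ and vanish identically. Your argument has the virtue of explaining \emph{why} the mechanism works (the $\mathcal{J}$-annihilation absorbs the second-order correction), while the paper's dimension count is quicker but less illuminating.
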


\begin{proof}
In order to study whether the wedge product between Rumin forms is well-defined, we will consider this differential operator $d_c$ in the following two cases:
\begin{itemize}
\item [i.] $d_c:\Omega^h/\mathcal{I}^h\to\Omega^{h+1}/\mathcal{I}^{h+1}$ where $h <m$,
\item[ii.] $d_c:\mathcal{J}^h\to\mathcal{J}^{h+1}$ where $h>m$.
\end{itemize}
Let us first stress that in the first case, given $\alpha\in\Omega^h/\mathcal{I}^h$, we have that 
\begin{align*}
d_c\alpha=d\alpha  \,\mod\;\mathcal{I}^{h+1}\;\text{ for }\;h<m\,.
\end{align*}
Since $\mathcal{I}$ is an ideal, if $h+k\leq m$, $\alpha\wedge\beta\in \Omega^{h+k}/\mathcal{I}^{h+k}$ is well defined. 

If $h+k<m$, the identity $d(\alpha\wedge\beta)=(d\alpha)\wedge\beta+(-1)^h\alpha\wedge d\beta$ passes to the quotient. 

It is important to notice that, however, if $h+k=m$, $h>0$, $k>0$, $d_c\alpha\wedge\beta+(-1)^h\alpha\wedge d_c\beta$ involves only first derivatives of $\alpha$ and $\beta$, and thus cannot be equal to $d_c(\alpha\wedge\beta)$. If $h=0$ and $k=m$, $d_c(\alpha\wedge\beta)$ involves second derivatives of $\alpha$, and therefore cannot be expressed in terms of $d_c\alpha$.

On the other hand, in the second case, given $\beta\in\mathcal{J}^h$, the Rumin differential coincides with the usual exterior differential,
\begin{align*}
d_c\beta=d\beta\;\text{ for }\;h>m\,.
\end{align*}

Therefore, given $\alpha\in\Omega^{h}/\mathcal{I}^{h}$ and $\beta\in\mathcal{J}^{k}$ with $h<m$ and $k>m$, the wedge product $\alpha\wedge\beta$ is well-defined and belongs to $\mathcal{J}^{h+k}$, and the usual Leibniz rule also applies:
\begin{align*}
d_c(\alpha\wedge\beta)=d(\alpha\wedge\beta)=(d\alpha)\wedge\beta+(-1)^h\alpha\wedge (d\beta)=d_c\alpha\wedge\beta+(-1)^h\alpha\wedge d_c\beta\,.
\end{align*}
If $h=m$ and $k\geq m+1$, $h+k\geq 2m+1$, so the identity between differentials holds trivially.

To conclude, the wedge product of Rumin forms is well defined and satisfies the Leibniz rule $d_c(\alpha\wedge\beta)=d_c\alpha\wedge\beta+(-1)^h\alpha\wedge d_c\beta$ if either $h\ge m+1$, or $k\ge m+1$, or $h+k<m$.

\end{proof}

\section{Averages on Heisenberg group: generic case}
\label{primitives}

\subsection{Primitives of linear growth}

\begin{lemma}
Let $\beta$ be a left-invariant Rumin $h$-form in the Heisenberg group $\mathbb{H}^{2m+1}$. If $h\not=m+1$, $\beta$ admits a primitive $\alpha$ of linear growth, i.e. at Carnot-Carath\'eodory distance $r$ from the origin, $|\alpha|\leq C\,r$. 
\end{lemma}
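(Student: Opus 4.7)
The plan is to construct $\alpha$ explicitly from polynomials of degree at most one in the horizontal coordinates $x_i,y_j$: these are precisely the functions whose CC-norm is bounded by a constant times $r$, while the vertical coordinate $t$ grows like $r^2$ and must be avoided. The argument splits on whether $h\leq m$ or $h\geq m+2$; the excluded degree $h=m+1$ corresponds exactly to where a Lefschetz-type surjectivity used below fails.

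For $h\leq m$, any left-invariant Rumin $h$-form has a constant-coefficient horizontal representative, i.e.\ a sum of monomials in the left-invariant horizontal coframe $\{dx_i,dy_j\}$. For each monomial $\theta_{i_1}\wedge\cdots\wedge\theta_{i_h}$ I replace the first factor by the corresponding coordinate function to obtain $\alpha=u_{i_1}\,\theta_{i_2}\wedge\cdots\wedge\theta_{i_h}$ (with $u_{i_1}$ one of $x_{i_1},y_{i_1}$ chosen so that $du_{i_1}=\theta_{i_1}$). Then $d\alpha=\theta_{i_1}\wedge\cdots\wedge\theta_{i_h}$ as an ordinary form, so summing and passing to $\Omega^{h-1}/\mathcal{I}^{h-1}$ gives $d_c[\alpha]=[\beta]$; linear growth follows since horizontal coordinates satisfy $|u_{i_1}|\leq Cr$.

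For $h\geq m+2$, every $\beta\in\mathcal{J}^h$ has the form $\beta=\omega_0\wedge\tau$ with $\omega_0$ a constant-coefficient horizontal $(h-1)$-form satisfying the Lefschetz condition $\omega_0\wedge d\tau=0$. I seek $\alpha=\tilde\omega\wedge\tau\in\mathcal{J}^{h-1}$, which reduces to finding a horizontal $(h-2)$-form $\tilde\omega$ with coefficients linear in $(x,y)$ obeying $d\tilde\omega=\omega_0$ and $\tilde\omega\wedge d\tau=0$; then $d\alpha=\omega_0\wedge\tau-\tilde\omega\wedge d\tau=\beta$. The first candidate is the Cartan--dilation primitive $\tilde\omega_{\mathrm{naive}}:=\frac{1}{h-1}\iota_{R_H}\omega_0$ with $R_H=\sum_i(x_i\partial_{x_i}+y_i\partial_{y_i})$; from $\mathcal{L}_{R_H}\omega_0=(h-1)\omega_0$ and $d\omega_0=0$ one gets $d\tilde\omega_{\mathrm{naive}}=\omega_0$ with linear-in-$(x,y)$ coefficients. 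The defect $\tilde\omega_{\mathrm{naive}}\wedge d\tau$ is a closed horizontal $h$-form with linear coefficients (closed thanks to $\omega_0\wedge d\tau=0$), and a second Cartan--dilation yields $\eta:=\frac{1}{h+1}\iota_{R_H}(\tilde\omega_{\mathrm{naive}}\wedge d\tau)$ with quadratic coefficients and $d\eta=\tilde\omega_{\mathrm{naive}}\wedge d\tau$. Because the Lefschetz map $L=d\tau\wedge(\cdot)$ on the horizontal exterior algebra sends horizontal $(h-3)$-forms onto horizontal $(h-1)$-forms precisely when $h-3\geq m-1$, i.e.\ $h\geq m+2$, I can solve $\xi\wedge d\tau=-\eta$ for a quadratic-coefficient horizontal $(h-3)$-form $\xi$. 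Setting $\tilde\omega:=\tilde\omega_{\mathrm{naive}}+d\xi$ preserves linear growth, keeps $d\tilde\omega=\omega_0$, and gives $\tilde\omega\wedge d\tau=\tilde\omega_{\mathrm{naive}}\wedge d\tau+d(\xi\wedge d\tau)=d\eta-d\eta=0$. Hence $\alpha=\tilde\omega\wedge\tau$ lies in $\mathcal{J}^{h-1}$, satisfies $d\alpha=\beta$, and has $|\alpha|\leq Cr$.

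The delicate step is the correction in the case $h\geq m+2$: the naive dilation primitive $\tilde\omega_{\mathrm{naive}}$ always has linear growth but is typically not Lefschetz-primitive, and the only way to restore this condition without introducing quadratic-growth corrections is through the surjectivity of $L$ in the stated range. Exactly at $h=m+1$ this surjectivity fails (primitives of degree $m$ provide a nontrivial cokernel), which is why the lemma excludes that degree and why it must be treated separately by the $L^1$-primitive argument of section \ref{poincare}.
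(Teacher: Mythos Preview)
Your argument is correct and genuinely different from the paper's. The paper proceeds abstractly: it invokes local exactness of the Rumin complex to get \emph{some} primitive $\alpha$, Taylor-expands it at the origin according to homogeneity under the Carnot dilations $\delta_t$, and observes that since $d_c$ commutes with $\delta_t$ and $\beta$ is homogeneous of weight $w$, the single homogeneous component $\alpha_w$ already satisfies $d_c\alpha_w=\beta$; because the weight of a Rumin $(h-1)$-form is $w-1$ when $h\neq m+1$, the coefficients of $\alpha_w$ are homogeneous of degree~$1$, hence linear. Your approach is instead fully constructive: for $h\le m$ you write down the Euclidean-style primitive directly, and for $h\ge m+2$ you build $\tilde\omega$ from the horizontal radial contraction $\iota_{R_H}\omega_0$ and then correct it into $\mathcal{J}^{h-1}$ via the surjectivity of the Lefschetz map $L:\Lambda^{h-3}_{\mathrm{hor}}\to\Lambda^{h-1}_{\mathrm{hor}}$, valid exactly for $h\ge m+2$. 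The paper's route is shorter and makes no use of the symplectic linear algebra of the horizontal distribution, so it transports more readily to other Carnot groups; your route has the virtue of giving an explicit formula and of pinpointing the obstruction at $h=m+1$ as the failure of Lefschetz surjectivity rather than as a weight jump.
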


\begin{proof}
Let $\beta\in E_0^{h}$ be a left-invariant form. Then $d_c\beta=0$, and $\beta$ has weight $w=h$ (if $h\leq m$) or $h+1$ (if $h>m+1$). We know that the Rumin complex is locally exact, that is $\exists\, \alpha\in E_0^{h-1}$ such that $d_c\alpha=\beta$.

Let us consider the Taylor expansion of $\alpha$ at the origin in exponential coordinates, and let us group terms according to their homogeneity under dilations $\delta_t$:
\begin{align*}
    \alpha=\alpha_0+\cdots+\alpha_{w-1}+\alpha_w+\alpha_{w+1}+\cdots\,
\end{align*}
where we denote by $\alpha_d$ the term with homogeneous degree d, i.e. $\delta_t^\ast\alpha_d=t^d\alpha_d$.

Since $d_c$ commutes with the dilations $\delta_\lambda$, the expansion of $d_c\alpha$ is therefore 
\begin{align*}
    d_c\alpha=d_c\alpha_0+\cdots+d_c\alpha_{w-1}+d_c\alpha_w+d_c\alpha_{w+1}+\cdots\,.
\end{align*}

The expansion of $\beta$ is given instead by $\beta=\beta$, given it is a left-invariant form, hence homogeneous of degree $w$, so that $\beta=d_c\alpha_w$.

Let us notice that $\alpha_w$ has degree $h-1$ and $h\not=m+1$, so it has weight $w-1$. Since it is homogeneous of degree $w$ under $\delta_\lambda$, its coefficients are homogeneous of degree $1$, that is they are linear in horizontal coordinates, hence $\alpha_w$ has linear growth, that is {$\vert\alpha\vert\le {C}\,{r}$}.
\end{proof}

\begin{prop}
Given $\omega\in E_0^{k}$ an $L^{1}$, $d_c$-closed Rumin form in $\mathbb{H}^{2m+1}$, then the integral
\begin{align*}
\int_{\mathbb{H}^{2m+1}}\omega\wedge\beta
\end{align*}
vanishes for all left-invariant Rumin forms $\beta$ of complementary degree, $\beta\in E_0^{2m+1-k}$, provided $k\not=m$.
\end{prop}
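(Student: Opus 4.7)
The plan is to integrate by parts in the pairing \(\int \omega \wedge \beta\), using a compactly supported cut-off and a linear-growth primitive of \(\beta\). The preceding lemma, applied to the left-invariant form \(\beta\) of degree \(2m+1-k\), produces a Rumin primitive \(\alpha \in E_0^{2m-k}\) with \(d_c\alpha = \beta\) and \(|\alpha(p)| \leq C\,r(p)\). This is exactly where the hypothesis \(k \neq m\) enters: it translates to \(\deg\beta = 2m+1-k \neq m+1\), the condition required by the lemma.

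Next I would take the cut-off \(\xi_R\) from the first lemma of Section \ref{cut-off}, equal to \(1\) on \(B(R)\), to \(0\) outside \(B(\lambda R)\), with \(|\nabla \xi_R| \leq C/R\). Since \(\omega \in L^1\) and \(\beta\) is bounded (being left-invariant), dominated convergence yields \(\int_G \xi_R\, \omega \wedge \beta \to \int_G \omega \wedge \beta\) as \(R \to \infty\), and it suffices to show this limit vanishes.

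The heart of the argument is to apply the Leibniz rule from Proposition \ref{dc of Wedge Product} to the product \((\xi_R\omega) \wedge \alpha\), where the two factors have degrees \(k\) and \(2m-k\). Since \(k\neq m\), either \(k\geq m+1\) or \(2m-k\geq m+1\), so the rule applies. Using \(d_c\omega = 0\) and \(d_c\alpha = \beta\), one obtains
\[
d_c(\xi_R\,\omega \wedge \alpha) = d_c\xi_R \wedge \omega \wedge \alpha + (-1)^k \xi_R\,\omega \wedge \beta.
\]
The left-hand side is the \(d_c\)-differential of a compactly supported \(2m\)-form; since Rumin forms of degree \(>m\) lie in \(\mathcal{J}^\bullet\) where \(d_c\) coincides with the de Rham differential, its integral vanishes by ordinary Stokes, giving
\[
\int_G \xi_R\,\omega \wedge \beta = \pm \int_G d_c\xi_R \wedge \omega \wedge \alpha.
\]

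The final estimate is straightforward. On the support of \(d_c\xi_R\), namely the annulus \(B(\lambda R)\setminus B(R)\), we have \(|d_c\xi_R| \leq C/R\) and \(|\alpha| \leq C\lambda R\), so their pointwise product is bounded by \(C\lambda\). Therefore
\[
\left|\int_G d_c\xi_R \wedge \omega \wedge \alpha\right| \leq C\lambda \int_{B(\lambda R)\setminus B(R)} |\omega|,
\]
which tends to \(0\) as \(R\to\infty\) since \(\omega \in L^1\). The only genuine obstacle is the validity of the Leibniz rule, which fails precisely in the excluded degree \(k=m\): there both factors have degree \(m\) and Proposition \ref{dc of Wedge Product} gives no identity. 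This is exactly why the borderline case is postponed to Section \ref{poincare}, where instead of a linear-growth primitive of \(\beta\) one builds controlled primitives of \(\omega\) itself on annuli.
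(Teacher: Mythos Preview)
Your proof is correct and follows the same route as the paper: use the linear-growth primitive $\alpha$ of $\beta$ (available since $\deg\beta\neq m+1$), apply the Leibniz rule to $(\xi_R\omega)\wedge\alpha$, invoke Stokes, and estimate using $|d_c\xi_R|\leq C/R$ against $|\alpha|\leq C\lambda R$ on the annulus. Your justification of the Leibniz hypothesis via the case split $k\geq m+1$ or $2m-k\geq m+1$ and your remark that $d_c=d$ in degree $2m$ (so Stokes is just ordinary Stokes) are exactly the points the paper uses, and your dominated-convergence step is equivalent to the paper's explicit $(1-\xi)$ remainder estimate.
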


\begin{proof}
Let $\omega$ be an $L^1$, $d_c$-closed Rumin $k$-form, $k\not=m$. Let $\beta$ be a left-invariant Rumin $h$-form, with $h=2m+1-k\neq m+1$. Let $\alpha$ be a linear growth primitive of $\beta$, $\vert\alpha\vert\le {C}\,{R}$. Let $\xi$ be a smooth cut-off such that $\xi=1$ on $B(R)$, $\xi=0$ outside $B(\lambda R)$ and $|d_c\xi|\leq C'/R$. Since, according to Proposition \ref{dc of Wedge Product},
\begin{align*}
    d(\xi\omega\wedge\alpha)=d_c(\xi\omega\wedge\alpha)
    &= d_c(\xi\omega)\wedge\alpha+(-1)^{k}\xi\omega\wedge d_c\alpha\\
    &=d_c\xi\wedge\omega\wedge\alpha+(-1)^{k}\xi\omega\wedge\beta,
\end{align*}{}
Stokes formula gives
\begin{align*}
\bigg\vert\int_{\H^{2m+1}}\xi\omega\wedge\beta\bigg\vert
&=\bigg\vert\int_{B(\lambda R)\setminus B(R)}d_c\xi\wedge\omega\wedge\alpha\bigg\vert\\
&\le\int_{B(\lambda R)\setminus B(R)}\vert d_c\xi\vert\vert\alpha\vert\vert\omega\vert\\ 
&\le \lambda CC'\Vert\omega\Vert_{L^1(\H^{2m+1}\setminus B(R))}.
\end{align*}
On the other hand,
\begin{align*}
\bigg\vert\int_{\H^{2m+1}}(1-\xi)\omega\wedge\beta\bigg\vert
&\le \Vert \beta\Vert_\infty \Vert\omega\Vert_{L^1(\H^{2m+1}\setminus B(R))}.
\end{align*}
Both terms tend to $0$ as $R$ tends to infinity, thus $\int_{\H^{2m+1}}\xi\omega\wedge\beta=0$. 
\end{proof}

This proves Theorem \ref{vanish} except in degree $k=m$. The argument collapses in this case, since  primitives of left-invariant $(m+1)$-forms have at least quadratic growth.

\section{Averages on Heisenberg group: special case}
\label{poincare}

We now describe a symmetric argument: produce a primitive of the $L^1$ form $\omega$ with linear growth. It applies for all degrees but $m+1$, and so covers the special case $k=m$.

Since $\omega$ is not in $L^\infty$ but is $L^1$, linear growth needs be taken in the $L^1$ sense: the $L^1$ norm of the primitive in a shell of radius $R$ is $O(R)$. It is not necessary to produce a global primitive with this property. It is sufficient to produce such a primitive $\phi_R$ in the $R$-shell $B(\lambda R)\setminus B(R)$. Indeed, Stokes formula leads to an integral
$$
\int_{\H^{2n+1}}\xi\omega\wedge\beta=\pm\int_{B(\lambda R)\setminus B(R)}d_c\xi\wedge\phi\wedge\beta
$$
which does not depend on the choice of primitive $\phi$.

\subsection{$\ell^{q,1}$ cohomology of bounded geometry Riemannian and subRiemannian manifolds}
\label{discretization}

By definition, the $\ell^{q,p}$ cohomology of a bounded geometry Riemannian manifold is the $\ell^{q,p}$ cohomology of every bounded geometry simplicial complex quasiisometric to it. For instance, of a bounded geometry triangulation.

Combining results of \cite{BFP3} and Leray's acyclic covering theorem (in the form described in \cite{pansu2017cup}), one gets that for $q=\frac{n}{n-1}$, the $\ell^{q,1}$-cohomology of a bounded geometry Riemannian $n$-manifold $M$ is isomorphic to the quotient
$$
L^{q,1}H^\cdot(M)=L^1(M)\cap ker(d)/(L^1\cap dL^q(M))
$$
of closed forms in $L^1$ by differentials of forms in $L^q$. In particular, if $M$ is compact, for all $p\leq \frac{n}{n-1}$, $L^{p,1}H^\cdot(M)$ is isomorphic to the usual (topological) cohomology of $M$. 

Similarly, if $M$ is a bounded geometry contact subRiemannian manifold of dimension $2m+1$ (hence Hausdorff dimension $Q=2m+2$), for $q=Q/(Q-1)$ (respectively $q=Q/(Q-2)$ in degree $m+1$), the $\ell^{q,1}$ cohomology of $M$ is isomorphic to the quotient
$$
L_c^{q,1}H^\cdot(M)=L^1(M)\cap ker(d_c)/(L^1\cap d_c L^q(M))
$$
of $d_c$-closed Rumin forms by $d_c$'s of Rumin forms in $L^q$.

This applies in particular to Heisenberg groups $\mathbb{H}^{2m+1}$, and also to shells in Heisenberg groups, but with a loss on the width of the shell.

\subsection{$L^1$-Poincar\'e inequality in shell $B(\lambda )\setminus B(1)$}

\begin{lemma}\label{shell}
There exist radii $0<\mu<1<\lambda<\mu'$ such that every $d_c$-exact $L^1$ Rumin $k$-form $\omega$ on $B(\mu')\setminus B(\mu)$ admits a primitive $\phi$ on $B(\lambda)\setminus B(1)$ such that
\begin{align*}
\Vert\phi\Vert_{L^1(B(\lambda)\setminus B(1))}\le C\cdot\Vert \omega\Vert_{L^1(B(\mu')\setminus B(\mu))}.
\end{align*}
\end{lemma}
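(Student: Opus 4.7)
The plan is to combine the local $L^1$-Poincar\'e inequality on Heisenberg balls coming from \cite{BFP3} with the \v{C}ech--Leray patching mechanism sketched in Section~\ref{discretization}. The shell $B(\mu')\setminus B(\mu)$ is a bounded geometry contact subRiemannian manifold, homotopy equivalent to a topological sphere $S^{2m}$, so its $\ell^{q,1}$-cohomology is finite-dimensional and coincides with its topological cohomology.

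First, I would fix radii $\mu<1<\lambda<\mu'$ together with a small parameter $\rho>0$ such that every CC-ball of radius $\rho$ centered in $B(\lambda)\setminus B(1)$ lies inside $B(\mu')\setminus B(\mu)$, and cover the inner shell by finitely many such balls $V_1,\ldots,V_N$. On each $V_\alpha$, the local Poincar\'e inequality derived from \cite{BFP3} produces a primitive $\phi_\alpha\in L^q(V_\alpha)$ of $\omega|_{V_\alpha}$ with $\|\phi_\alpha\|_{L^q}\leq C\|\omega\|_{L^1(V_\alpha)}$, where $q=Q/(Q-1)$ (respectively $q=Q/(Q-2)$ in degree $m+1$). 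I would then glue the $\phi_\alpha$ into a global primitive on $B(\lambda)\setminus B(1)$ using a partition of unity subordinate to $\{V_\alpha\}$ and an iterated \v{C}ech--de Rham resolution: the differences $\phi_\alpha-\phi_\beta$ on pairwise overlaps form a \v{C}ech $1$-cocycle with values in $d_c$-closed $L^q$ Rumin $(k-1)$-forms, whose class under Leray's isomorphism maps to the class of $\omega$ in $\ell^{q,1}H^k$ of the shell. Since $\omega$ is $d_c$-exact by hypothesis, this class vanishes; iterating through higher \v{C}ech degrees resolves the obstruction completely and yields a primitive $\phi\in L^q(B(\lambda)\setminus B(1))$ with $\|\phi\|_{L^q}\leq C'\|\omega\|_{L^1(B(\mu')\setminus B(\mu))}$. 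Since $B(\lambda)\setminus B(1)$ has finite Lebesgue measure, H\"older's inequality converts this $L^q$ bound into the required $L^1$ bound.

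The main obstacle will be the careful quantitative bookkeeping along the \v{C}ech iteration: each resolution step invokes the local Poincar\'e inequality on higher-order intersections and picks up multipliers coming from horizontal derivatives of the cut-offs. This is precisely the origin of the ``loss on the width of the shell'' mentioned in Section~\ref{discretization}, and explains the strict inequalities $\mu<1$ and $\lambda<\mu'$: one needs an initial margin to accommodate the successive shrinkings. Bounded geometry together with the finite combinatorics of the nerve of $\{V_\alpha\}$ guarantees that the iteration terminates after a uniformly bounded number of steps, yielding the universal constant $C$ in the statement.
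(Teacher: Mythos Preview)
Your \v{C}ech--Leray patching scheme is a legitimate alternative and genuinely different from the paper's route. The paper instead first establishes a smoothing homotopy (Lemma~\ref{homotopyshell}, built from the convolution kernel of \cite{BFP3}) writing $\omega=d_cT\omega+S\omega$ with $T\omega\in L^1$ and $S\omega\in W^{1,1}$ on a slightly smaller shell; it then applies Rumin's algebraic homotopy $\Pi_E$ to turn $S\omega$ into an ordinary $d$-closed de Rham form, solves the resulting \emph{Euclidean} $L^1$-Poincar\'e problem on the shell by explicit radial integration on $[0,1]\times S^{n-1}$ together with an $L^1$-Poincar\'e inequality on the sphere, and transfers the primitive back to the Rumin side via $\Pi_{E_0}$. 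This is more hands-on and completely sidesteps the iterated bookkeeping you flag as the main obstacle; conversely, your approach is more modular and would apply to bounded-geometry domains lacking a product structure.

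One substantive point your sketch does not address: the zig-zag through the \v{C}ech--Rumin bicomplex requires computing $d_c(\rho_\gamma\cdot\phi)$ for partition-of-unity functions $\rho_\gamma$, and when $\deg\phi=m$ the Rumin differential is second order, so the naive Leibniz rule fails (cf.\ Proposition~\ref{dc of Wedge Product}) and first horizontal derivatives of $\phi$ itself---not merely of the cut-offs---enter the estimate. This is precisely what the paper's detour through de Rham avoids. It can be repaired in your framework because the local homotopy is of order $-2$ in that degree and hence gains an extra derivative, but it deserves a sentence. For the decisive application of the lemma (degree $k=m$, the only case left open by Section~\ref{primitives}) the iteration runs entirely through form degrees $0,\ldots,m-1$, where $d_c$ is first order and the issue does not arise.
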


In Euclidean space, the analogous statement can be proved as follows. Up to a biLipschitz change of coordinates, one replaces shells with products $[0,1]\times S^{n-1}$. On such a product, a differential form writes $\omega=a_t+dt\wedge b_t$ where $a_t$ and $b_t$ are differential forms on $S^{n-1}$. $\omega$ is closed if and only if each $a_t$ is closed and 
$$
\frac{\partial a_t}{\partial t}=b_t.
$$

Given $r\in[0,1]$, define
$$
\phi_r=e_t +dt\wedge f_t\quad \mbox{ where }\quad e_t=\int_r^t b_s\,ds,\quad f_t=0.
$$
Then $d\phi_r=\omega-a_r$. Set
$$
\phi=\int_0^1 \phi_r \,dr, \quad \mbox{ so that }\quad d\phi=\omega-\bar\omega\quad \mbox{ where }\quad \bar\omega=\int_0^1 a_r\,dr.
$$
Note that each $a_r$, and hence $\bar\omega$, is an exact form on $S^{n-1}$. Since 
$$
\Vert\bar\omega\Vert_{L^1(S^{n-1})}\leq \Vert\omega\Vert_{L^1([0,1]\times S^{n-1})},
$$
according to Subsection \ref{discretization}, there exists a form $\bar\phi$ on $S^{n-1}$ such that $d\bar\phi=\bar\omega$ and
$$
\Vert\bar\phi\Vert_{L^1(S^{n-1})}\leq C\,\Vert\bar\omega\Vert_{L^1(S^{n-1})}.
$$
Hence $\phi-\bar\phi$ is the required primitive.

 The Heisenberg group case reduces to the Euclidean case thanks to a smoothing homotopy constructed in \cite{BFP3}. In fact, since $\phi$ merely needs be estimated in $L^1$ norm (and not in the sharp $L^q$ norm), only the first, elementary, steps of \cite{BFP3} are required, resulting in the following result.

\begin{lemma}\label{homotopyshell}
For every radii $\mu<1<\lambda<\mu'$, there exists a constant $C$ with the following property. For every $d_c$-exact $L^1$ Rumin form $\omega$ on the large shell $B(\mu')\setminus B(\mu)$ of $\mathbb{H}^{2m +1}$, there exist $L^1$ Rumin forms $T\omega$ and $S\omega$ on the smaller shell $B(\lambda)\setminus B(1)$ such that $\omega=d_c T\omega+S\omega$ on the smaller shell,
$$
\Vert T\omega\Vert_{L^1(B(\lambda)\setminus B(1))}+\Vert S\omega\Vert_{W^{1,1}(B(\lambda)\setminus B(1))} \leq C\,\Vert\omega\Vert_{L^1(B(\mu')\setminus B(\mu))}.
$$
Here, the $W^{1,1}$ norm refers to the $L^1$ norms of the first horizontal derivatives.
\end{lemma}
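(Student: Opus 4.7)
The plan is to invoke the smoothing-homotopy construction of \cite{BFP3}, in the weak form where only $L^1$-to-$L^1$ and $L^1$-to-$W^{1,1}$ mapping properties are required (rather than the sharp $L^1$-to-$L^q$ bounds that form the main novelty of \cite{BFP3}). Concretely, for each pair of concentric shells with the nesting $\mu<1<\lambda<\mu'$, one constructs linear operators $H$ (a Rumin-complex homotopy) and $P$ (a smoothing operator) on Rumin forms, satisfying the identity
\[
I \;=\; d_c H + H d_c + P
\]
on $B(\lambda)\setminus B(1)$ for inputs defined on $B(\mu')\setminus B(\mu)$, with $H\colon L^1\to L^1$ and $P\colon L^1\to W^{1,1}$ bounded by constants depending only on the shells. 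Granted this, the lemma is immediate: since $\omega$ is $d_c$-exact we have $d_c\omega=0$, so the identity collapses to $\omega=d_cH\omega+P\omega$, and one sets $T\omega:=H\omega$, $S\omega:=P\omega$; the required norm estimates then follow directly from the boundedness of $H$ and $P$.

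The construction of $H$ and $P$ that I would use combines two standard ingredients. First, on a slight enlargement of the big shell, use a parametrix $K_0$ for the Rumin Laplacian to produce a homotopy operator whose kernel has mild homogeneous singularities of Carnot--Carath\'eodory type. Second, regularize by convolving $K_0$ against a smooth bump $\chi_\epsilon$ whose horizontal support has radius smaller than $\min(1-\mu,\mu'-\lambda,\lambda-1)$, setting $H:=K_0*\chi_\epsilon$; the compensation $P$ that repairs the identity is then an integral operator with fully smooth, compactly supported kernel, hence maps $L^1$ into $W^{1,1}$ with the required bound. The convolution support is precisely what forces the loss in shell width from $(\mu,\mu')$ to $(1,\lambda)$.

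The main obstacle, and the reason a reference to \cite{BFP3} is needed rather than a self-contained argument, is that the Rumin-complex parametrix must respect the weight grading: in the middle degree $k=m$, the operator $d_c$ is of horizontal order two, so the parametrix requires a commensurate second-order fundamental solution and each weight component of $\omega$ must be treated with a kernel of the matching order. This bookkeeping is carried out in the opening sections of \cite{BFP3}. However, since only $L^1$-bounds are sought here, the delicate Bourgain--Brezis--Chanillo--van~Schaftingen compensation machinery invoked later in \cite{BFP3} is not needed, and the boundedness assertions reduce to elementary Young and Sobolev estimates for convolution against smooth compactly supported kernels.
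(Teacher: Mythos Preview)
Your overall plan coincides with the paper's: invoke the homotopy/parametrix $K$ for $d_c$ from \cite{BFP3}, localize via a cut-off on the large shell, split the resulting operator into a piece that is merely $L^1\to L^1$ and a piece that is smoothing $L^1\to W^{1,1}$, and note that only Young-type estimates (not the Bourgain--Brezis machinery) are needed. You also correctly identify why the shells shrink and why the middle degree needs a second-order kernel.

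However, the concrete splitting you propose is backwards. You set $H:=K_0*\chi_\epsilon$, i.e.\ you \emph{mollify} the parametrix kernel, and then claim that the compensator $P=I-d_cH-Hd_c$ has a smooth kernel. It does not. Since convolution by $\chi_\epsilon$ commutes with the left-invariant operator $d_c$, one gets $d_cH+Hd_c=(d_cK_0+K_0d_c)*\chi_\epsilon=I*\chi_\epsilon$, hence $P\omega=\omega-\chi_\epsilon*\omega$. This is the high-frequency part of $\omega$; it is no smoother than $\omega$ itself, so the claimed bound $\|P\omega\|_{W^{1,1}}\le C\|\omega\|_{L^1}$ fails. (Equally, the kernel of your $H$ is smooth but inherits the $r^{1-Q}$ decay of $k_0$ at infinity, so it is not in $L^1$ and the Young argument for $H:L^1\to L^1$ is not available either.)

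The paper does the opposite splitting: write the kernel $k=k_1+k_2$ with $k_1$ the \emph{truncation} of $k$ to an $\epsilon$-ball (so $k_1\in L^1$, being $O(r^{1-Q})$ near $0$) and $k_2=k-k_1$ smooth. Then $T=K_1\chi_1$ is $L^1\to L^1$ by Young, while $S$, built from $K_2$, has a smooth kernel and maps $L^1\to W^{s,1}$ for every $s$; the choice of $\epsilon$ relative to the gap between the shells is what makes the cut-off $\chi_1$ invisible on the smaller shell. Your argument becomes correct once you replace ``convolve $K_0$ with $\chi_\epsilon$'' by ``multiply the kernel of $K_0$ by a cut-off supported in $B(\epsilon)$''.
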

\begin{proof}
Pick a smooth function $\chi_1$ with compact support in the large shell $A$. According to Lemma 6.2 of \cite{BFP3}, there exists a left-invariant pseudodifferential operator $K$ such that the identity
$$
\chi_1=d_c K\chi_1+Kd_c\chi_1
$$
holds on the space of Rumin forms
$$
L^1\cap d_c^{-1}L^1:=\{\alpha\in L^1(A)\,;\,d_c\alpha\in L^1(A)\}.
$$
$K$ is the operator of convolution with a kernel $k$ of type $1$ (resp. $2$ in degree $n+1$). Using a cut-off, write $k=k_1+k_2$ where $k_1$ has support in an $\epsilon$-ball and $k_2$ is smooth. Since $k_1=O(r^{1-Q})$ or $O(r^{2-Q})\in L^1$, the operator $K_1$ of convolution with $k_1$ is bounded on $L^1$. Hence $T=K_1 \chi$ is bounded on $L^1$ forms defined on $A$. Whereas $S=d_c K_2\chi_1$ is bounded from $L^1$ to $W^{s,1}$ for every integer $s$. If $\mu'>\lambda+2\epsilon$ and $\mu<1-2\epsilon$, the multiplication of $\omega$ by $\chi_1$ has no effect on the restriction of $d_c K_1\omega$ or $K_1 d_c \omega$ to the smaller shell, hence, in restriction to the smaller shell, 
$$
d_c T\omega=d_c K_1\chi_1\omega=(d_c K_1+K_1 d_c) \omega.
$$
It follows that
$$
S\omega =d_c K_2\chi_1\omega=(d_c K_2+K_2 d_c) \omega,
$$
and finally, in restriction to the smaller shell, 
$$
\omega=d_c T\omega+S\omega.
$$
\end{proof}
\textbf{Proof of Lemma \ref{shell}}.

\begin{proof}
Using the exponential map, one can use simultaneously Heisenberg and Euclidean tools. Pick $\lambda,\mu,\mu'$ such that the medium Heisenberg shell $B(\mu'-2\epsilon)\setminus B(\mu+2\epsilon)$ contains the Euclidean shell $A_{eucl}=B_{eucl}(2)\setminus B_{eucl}(1)$, which in turn contains the smaller Heisenberg shell $B(\lambda)\setminus B(1)$. Apply Lemma \ref{homotopyshell} to a $d_c$-closed $L^1$ form $\omega$ defined on the larger Heisenberg shell. Up to $d_c T\omega$, and up to restricting to the medium shell, one can replace $\omega$ with $S\omega$ which has its first horizontal derivatives in $L^1$. Apply Rumin's homotopy $\Pi_E=1-dd_0^{-1}-d_0^{-1}d$ to get a usual $d$-closed differential form $\beta=\Pi_E S\omega$ belonging to $L^1$. Use the Euclidean version of Lemma \ref{shell} to get an $L^1$ primitive $\gamma$, $d\gamma=\beta$, on the Euclidean shell $A_{eucl}$. Apply the order zero homotopy $\Pi_{ E_0}=1-d_0 d_0^{-1}-d_0^{-1}d_0$ to get a Rumin form $\phi=\Pi_{ E_0}\gamma$. Its restriction to the smaller Heisenberg shell satisfies $d_c\phi=\omega$ and its $L^1$ norm is controlled by $\Vert\omega\Vert_1$.
\end{proof}
\subsection{$L^1$-Poincar\'e inequality in scaled shell $B(\lambda R)\setminus B(R)$}\label{Exact omega on the shell}

Let $0<\mu<1<\lambda<\mu'$. Let $\omega$ be a Rumin $k$-form on the scaled annulus $B(\mu' R)\setminus B(\mu R)$. Assume that there exists a Rumin $(k-1)$-form $\phi$ on the thinner shell on $B(\lambda R)\setminus B(R)$ such that $\omega=d_c\phi$ on that shell. 

Let's denote the dilation by $R$ as
\begin{align*}
\delta_R:B(\lambda)\setminus B(1)\to B(\lambda R)\setminus B(R)
\end{align*}
then we can consider the pull-back of both forms:
\begin{itemize}
\item $\omega_R:=\delta^\ast_R(\omega)$ on $B(\lambda)\setminus B(1)$, and
\item $\phi_R:=\delta^\ast_R(\phi)$ on $B(\lambda)\setminus B(1)$.
\end{itemize}

Since $\delta_R^\ast$ commutes with the Rumin differential $d_c$, we have
\begin{align*}
\omega_R=\delta_R^\ast(\omega)=\delta_R^\ast(d_c\phi)=d_c(\delta_R^\ast\phi)=d_c\phi_R\,.
\end{align*}
Then, for $\omega_R$ we have
\begin{align*}
\Vert\delta_R^\ast\omega\Vert_{L^1(B(\lambda)\setminus B(1))}=&\int_{B(\lambda)\setminus B(1)}\vert\omega(\delta_R(x))\vert\cdot R^wdx\underbrace{=}_{y=\delta_R(x)}R^w\int_{B(\lambda R)\setminus B(R)}\vert\omega\vert(y)\cdot\frac{1}{R^{Q-1}}dy\\=&R^{w-(Q-1)}\int_{B(\lambda R)\setminus B(R)}\vert\omega\vert(y)dy=R^{w-(Q-1)}\cdot
\Vert\omega\Vert_{L^1(B(\lambda R)\setminus B(R)}\,,
\end{align*}
so that
\begin{align}\label{omegaL1norm}
\Vert\omega_R\Vert_{L^1(B(\lambda)\setminus B(1))}=R^{w-(Q-1)}\Vert\omega\Vert_{L^1(B(\lambda R)\setminus B(R))}
\end{align}
where $w$ is the weight of the $k$-form $\omega$. 

Likewise, for the $(k-1)$-form $\phi$ we get
\begin{align}\label{phiL1norm}
\Vert\delta_R^\ast\phi\Vert_{L^1(B(\lambda)\setminus B(1))}=R^{\tilde{w}-(Q-1)}\Vert\phi\Vert_{L^1(B(\lambda R)\setminus B(R))}\,,
\end{align}
where in this case $\tilde{w}$ is the weight of the form $\phi$.

Since we are working on $\mathbb{H}^{2m+1}$ and $\omega=d_c\phi$ (and likewise $\omega_R=d_c\phi_R$), we have
\begin{itemize}
\item $\tilde{w}=w-1$, if $k\neq m+1$, and 
\item $\tilde{w}=w-2$, if $k=m+1$. 
\end{itemize}

According to Lemma \ref{shell}, one can find a $(k-1)$-form $\phi_R$ on $B(\lambda)\setminus B(1)$ such that
\begin{align*}
\Vert\phi_R\Vert_{L^1(B(\lambda)\setminus B(1))}\le C\cdot\Vert \omega_R\Vert_{L^1(B(\mu')\setminus B(\mu))}
\end{align*}
so, using the equalities (\ref{omegaL1norm}) and (\ref{phiL1norm}) we get the following inequality:
\begin{align*}
\Vert\phi\Vert_{L^1(B(\lambda R)\setminus B(R))}\le C\cdot R^{w-\tilde{w}}\Vert \omega\Vert_{L^1(B(\mu' R)\setminus B(\mu R))}
\end{align*}
which divides into the following two cases
\begin{itemize}
\item $\Vert\phi\Vert_{L^1(B(\lambda R)\setminus B(R))}\le C\cdot R\cdot\Vert \omega\Vert_{L^1(B(\mu' R)\setminus B(\mu R))}$ if $k\neq m+1$, and
\item $\Vert\phi\Vert_{L^1(B(\lambda R)\setminus B(R))}\le C\cdot R^2\cdot\Vert \omega\Vert_{L^1(B(\mu' R)\setminus B(\mu R))}$ if $k=m+1$.
\end{itemize}
Only the first case is useful for our purpose.

\subsection{Independence on the choice of primitive}

Let us consider the exact Rumin form $\omega\in E_0^{k}$ and let $\phi,\,\psi\in E_0^{k-1}$ be two primitives of $\omega$ on $B(\lambda R)\setminus B(R)$, i.e. $d_c\psi=d_c\phi=\omega$. Let $\beta$ be an arbitrary left-invariant Rumin form $\beta$ of complementary degree $2m+1-k$. 

If $k\not=2m$, then $H^{k}(B(\lambda R)\setminus B(R))=0$, which means that there exists a Rumin $(k-2)$-form $\alpha$ such that $d_c\alpha=\psi-\phi$. 

If $k\not=m+1$, the degree of $\beta$ is $2m+1-k\not=m$, so $d_c(\xi\beta)=(d_c\xi)\wedge\beta+\xi d_c\beta=(d_c\xi)\wedge\beta$, thus $\gamma=(d_c\xi)\wedge\beta$ is a well-defined $d_c$-closed Rumin form (this is a special case of Proposition \ref{dc of Wedge Product}).

If on top we also assume $k\not=m+2$, given  $\gamma=d_c(\xi\beta)=d_c\xi\wedge\beta$, we have that the form $\alpha\wedge\gamma$ has degree $2m\ge m+1$, so we can apply Proposition \ref{dc of Wedge Product} and obtain the following equality

\begin{align*}
d(\alpha\wedge\gamma)
=d_c(\alpha\wedge\gamma)\pm\alpha\wedge d_c\gamma
=(d_c\alpha)\wedge \gamma
=(\psi-\phi)\wedge (d_c\xi)\wedge\beta.
\end{align*}

Since by construction $d_c\xi$ has compact support in $B(\lambda R)\setminus B(R)$, 
\begin{align*}
\int_{\mathbb{H}^{2m+1}}d_c\xi\wedge(\psi-\phi)\wedge\beta=0.
\end{align*}
Therefore, when $k\not=m+1,m+2,2m$, we can replace a given primitive $\psi$ with any other arbitrary primitive $\phi$ of $\omega$ on the scaled shell $B(\lambda R)\setminus B(R)$.

\subsection{Vanishing of averages}

\begin{prop}
Given $\omega\in E_0^{k}$ an $L^{1}$, $d_c$-closed Rumin form in $\mathbb{H}^{2m+1}$, then the integral
\begin{align*}
\int_{\mathbb{H}^{2m+1}}\omega\wedge\beta
\end{align*}
vanishes for all left-invariant Rumin forms $\beta$ of complementary degree, $\beta\in E_0^{2m+1-k}$, provided $k\not=m+1,m+2,2m$.
\end{prop}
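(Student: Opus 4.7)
The plan is to carry out the symmetric integration by parts announced at the start of Section \ref{poincare}: rather than looking for a linear-growth primitive of $\beta$ (which collapsed in degree $k=m$), I would construct a primitive of $\omega$ on each scaled shell with a linear $L^1$ bound. Fix radii $0<\mu<1<\lambda<\mu'$ as in Lemma \ref{shell} and pick a cut-off $\xi_R$ satisfying $\xi_R=1$ on $B(R)$, $\xi_R=0$ outside $B(\lambda R)$, and $|d_c\xi_R|\leq C/R$ (the $L^\infty$ version of Definition \ref{cutoff defin} suffices). I first split
\[
\int_{\mathbb{H}^{2m+1}}\omega\wedge\beta \;=\; \int \xi_R\,\omega\wedge\beta \;+\; \int (1-\xi_R)\,\omega\wedge\beta,
\]
and observe that the second summand is bounded by $\|\beta\|_\infty\,\|\omega\|_{L^1(\mathbb{H}^{2m+1}\setminus B(R))}$ and vanishes as $R\to\infty$ by the $L^1$ integrability of $\omega$.

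For the first summand I would introduce an auxiliary global primitive $\psi$ of $\omega$ on the ball $B(\mu' R)$, which exists because the ball is contractible and $\omega$ is regular enough to apply the Poincar\'e lemma for $d_c$ (Theorem 3.3 of \cite{Pansu-Rumin}). Since $\deg(\xi_R)=0$, $\deg(\psi)=k-1$, $\deg(\beta)=2m+1-k$, and $k\neq m+1$, Proposition \ref{dc of Wedge Product} applies to both $d_c(\xi_R\psi)=d_c\xi_R\wedge\psi+\xi_R\,\omega$ and $d_c((\xi_R\psi)\wedge\beta)=d_c(\xi_R\psi)\wedge\beta$ (using $d_c\beta=0$). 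Since $\xi_R\psi\wedge\beta$ has compact support, Stokes yields
\[
\int \xi_R\,\omega\wedge\beta \;=\; -\int_{B(\lambda R)\setminus B(R)} d_c\xi_R\wedge\psi\wedge\beta.
\]

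The right-hand side depends a priori on the uncontrolled primitive $\psi$, and this is precisely where the two remaining exclusions enter: $k\neq 2m$ forces the shell cohomology to vanish, so that any two primitives of $\omega$ on the shell differ by a $d_c$-exact form, while $k\neq m+2$ is the exact Leibniz compatibility of Proposition \ref{dc of Wedge Product} needed to turn that difference into a boundary term that Stokes kills. This is the independence-of-primitive statement of Subsection \ref{Exact omega on the shell}. Invoking it, I may replace $\psi$ by the shell primitive $\phi$ produced by the scaled version of Lemma \ref{shell}; since $k\neq m+1$ one is in the \emph{linear}-growth case and obtains
\[
\|\phi\|_{L^1(B(\lambda R)\setminus B(R))} \;\leq\; C\,R\,\|\omega\|_{L^1(B(\mu' R)\setminus B(\mu R))}.
\]
Combining $|d_c\xi_R|\leq C/R$, this bound, and $\|\beta\|_\infty<\infty$ gives
\[
\Bigl|\int \xi_R\,\omega\wedge\beta\Bigr| \;\leq\; C'\,\|\omega\|_{L^1(B(\mu' R)\setminus B(\mu R))}\,\|\beta\|_\infty,
\]
which tends to $0$ as $R\to\infty$ by $L^1$ integrability of $\omega$, completing the proof. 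The only genuinely delicate point is the simultaneous fulfilment of the three degree restrictions: $k\neq m+1$ is needed both for the Leibniz rule with $\beta$ and for the linear (rather than quadratic) growth of the shell primitive, while $k\neq m+2$ and $k\neq 2m$ are exactly what decouples the estimate from the global primitive $\psi$. The analytic estimate itself is now a mechanical combination of the tools assembled in the preceding subsections; the book-keeping of which version of Proposition \ref{dc of Wedge Product} is invoked at each step is the main place where one must be careful.
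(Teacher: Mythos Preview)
Your proposal is correct and follows essentially the same route as the paper's own proof: split off the tail $\int(1-\xi_R)\omega\wedge\beta$, integrate the main term by parts against an (uncontrolled) primitive $\psi$, then invoke the independence-of-primitive argument (using $k\neq 2m$ and $k\neq m+2$) to swap in the shell primitive $\phi$ with the linear $L^1$ bound from the scaled Poincar\'e inequality ($k\neq m+1$). The only cosmetic differences are that the paper takes $\psi$ global on $\mathbb{H}^{2m+1}$ rather than on a large ball, and organizes the final estimate as a bound on $\int_{B(R)}\omega\wedge\beta$ rather than your direct two-term split; neither changes the argument.
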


\begin{proof}
We assume that $k\neq m+1, m+2, 2m$.

Let $\psi$ be a global primitive of $\omega$ on $\mathbb{H}^{2m+1}$. Let us first analyse the following identities
\begin{align*}
\xi\omega\wedge\beta&=\xi(d_c\psi)\wedge\beta=-d_c\xi\wedge\psi\wedge\beta+d(\xi\psi\wedge\beta).
\end{align*}
Let $\phi$ be the primitive of $\omega$ on $B(\lambda R)\setminus B(R)$ introduced in Section \ref{Exact omega on the shell}. We can then replace $\int_{\mathbb{H}^{2m+1}}d_c\xi\wedge\psi\wedge\beta$ with $\int_{\mathbb{H}^{2m+1}}d_c\xi\wedge\phi\wedge\beta$, and by applying Stokes' theorem, we get
\begin{align*}
\bigg\vert\int_{\mathbb{H}^{2m+1}}\xi\omega\wedge\beta\bigg\vert
&=\bigg\vert\int_{B(\lambda R)\setminus B(R)}d_c\xi\wedge\psi\wedge\beta\bigg\vert\\
&=\bigg\vert\int_{B(\lambda R)\setminus B(R)}d_c\xi\wedge\phi\wedge\beta\bigg\vert\\ 
&\le\Vert d_c\xi\Vert_{\infty}\Vert\beta\Vert_{\infty}\Vert\phi\Vert_{L^{1}(B(\lambda R)\setminus B(R))}.
\end{align*}

Finally, knowing that $\Vert d_c\xi\Vert_\infty\le C'/R$ and applying the Poincar\'e inequality on the shell \begin{align*}
\Vert\phi\Vert_{L^1(B(\lambda R)\setminus B(R))}\le C\cdot R\cdot\Vert \omega\Vert_{L^1(B(\mu' R)\setminus B(\mu R))}
\end{align*}
found in Subsection \ref{Exact omega on the shell}, we finally get
\begin{align*}
    \bigg\vert\int_{\mathbb{H}^{2n+1}}\xi\omega\wedge\beta\bigg\vert
&\le CC' \Vert\omega\Vert_{L^1(B(\mu' R)\setminus B(\mu R))}\,.
\end{align*}

Using the cut-off function $\xi$ introduced in Definition \ref{cutoff defin}, we have
\begin{align*}
\int_{\mathbb{H}^{2m+1}}\xi\omega\wedge\beta=\int_{B(R)}\omega\wedge\beta+\int_{B(\lambda R)\setminus B(R)}\xi\omega\wedge\beta\,.
\end{align*}

Hence
\begin{align*}
\bigg\vert\int_{B(R)}\omega\wedge\beta\bigg\vert&\le\bigg\vert\int_{\mathbb{H}^{2n+1}}\xi\omega\wedge\beta\bigg\vert+\bigg\vert\int_{B(\lambda R)\setminus B(R)}\xi\omega\wedge\beta\bigg\vert\\ &\le CC'\Vert\omega\Vert_{B(\mu' R)\setminus B(\mu R)}+\int_{B(\lambda R)\setminus B(R)}\Vert\beta\Vert_\infty\cdot\vert \omega\vert\\
&\le {C''}\Vert\omega\Vert_{L^1(B(\mu' R)\setminus B(\mu R))}.
\end{align*}
Since $\Vert\omega\Vert_{L^1(B(\mu' R)\setminus B(\mu R))}\to 0$ as $R\to\infty$, we get our result
\begin{align*}
\int_{\mathbb{H}^{2m+1}}\omega\wedge\beta=0\,.
\end{align*}

\end{proof}

This completes the proof of Theorem \ref{vanish}.

\begin{oss}
Let us notice that this method would not work in the case where $k=m+1$, since we would obtain the following inequality
\begin{align*}
\int_{B(R)}\omega\wedge\beta\le C\cdot R\lVert\omega\Vert_{L^1(B(\lambda R)\setminus B(R))}
\end{align*}
which is not conclusive.
\end{oss}

\section*{Acknowledgments}

P.P. is supported by Agence Nationale de la Recherche, ANR-15-CE40-0018 SRGI. F.T. is partially supported by the Academy of Finland grant
288501 and by the
ERC Starting Grant 713998 GeoMeG.

\bibliographystyle{plain}
\bibliography{bibli.bib}

\bigskip
\vspace{0.5cm}
\tiny{
\noindent
Pierre Pansu 
\par\noindent Laboratoire de Math\'ematiques d'Orsay,
\par\noindent Universit\'e Paris-Sud, CNRS,
\par\noindent Universit\'e
Paris-Saclay, 91405 Orsay, France.
\par\noindent
e-mail: pierre.pansu@math.u-psud.fr\newline
}

\tiny{
\par\noindent
Francesca Tripaldi 
\par\noindent Department of Mathematics and Statistics,
\par\noindent University of Jyv\"askyl\"a,
\par\noindent 40014, Jyv\"askyl\"a, Finland.
\par\noindent email: francesca.f.tripaldi@jyu.fi
}

\end{document}